\newtheorem{guia}{}
\newtheorem{rem}{}
\newtheorem{teorema}[guia]{Theorem}
\newtheorem{lema}[guia]{Lemma}
\newtheorem{obs}[rem]{\it Remark}
\newtheorem{obss}[rem]{\it Remarks}
\newtheorem{ejem}[guia]{Example}
\newcommand{\al}{\alpha}
\newcommand{\De}{\Delta}
\newcommand{\e}{\varepsilon}
\newcommand{\g}{\gamma}
\newcommand{\na}{\nabla}
\newcommand{\om}{\omega}
\newcommand{\Om}{\Omega}
\newcommand{\Omb}{\overline{\Omega}}
\newcommand{\p}{\partial}
\newcommand{\R}{\mathbb R}
\newcommand{\s}{\sigma}
\newcommand{\vf}{\varphi}
\newcommand{\z}{\zeta}
\def\iz{\left }
\def\der{\right }
\def\menos{\setminus}
\newcommand\map[5]
\newcommand\bla[1]{{\mathbf {#1}}}
\def\sob{W^{1,p}}
\newcommand\uno[1]{{\{{#1}\}}}
\def\menos{\setminus}
\def\eren{{\mathbb R}^N}
\def\ene{{\mathcal N}}
\def\lae{\eqref}
\newcommand\rango[3]{{#1}\le{#2}\le{#3}}
\newcommand\lai[1]{\item[{#1})]}
\def\bn{{\mathbf n}}
\def\tij{{\widetilde J}}
\def\dist{\text{dist}}
\def\lau{{\mathcal U}}
\begin{document}

\title[Trace inequality in $BV$]
{\bf Trace inequality for $BV(\Omega)$ in smooth domains}

\author[J.C. Sabina de Lis]{Jos\'e C. Sabina de Lis\, \orcidlink{0000-0002-2378-7614}}

\date{\today}

\address{Jos\'{e} Sabina de Lis
\hfill \break \indent Departamento de An\'{a}lisis Matem\'{a}tico {\rm and} IUEA,
\hfill \break \indent Universidad de La
Laguna, \hfill \break \indent P. O. Box 456, 38200 -- La Laguna, SPAIN.}
\email{{\tt josabina@ull.edu.es}}

\keywords{Trace inequality, functions of bounded variation, sets of finite perimeter}

\subjclass[2010]{46E35,26D10,35J92}


\begin{abstract} This work is devoted to prove an optimum version of
the trace inequality associated to the embedding $BV(\Om)\subset L^1(\p\Om)$.
Special emphasis is placed on the regularity that the domain $\Om$
should exhibit for this result to be valid.
\end{abstract}

\maketitle

\tableofcontents

\section{Introduction}\label{s:1}
It has been known for a long time that an inequality of the form,
\begin{equation}
\label{i1}
\int_{\p\Om}|u| \le C_1\int_\Om |Du|+ C_2\int_\Om |u|,
\end{equation}
$C_i >0$, $i=1,2$, is satisfied for the functions $u$ belonging to the space
$W^{1,1}(\Om)$ provided $\Om\subset \eren$ is a bounded Lipschitz domain (see the pioneering work
\cite{Gag}, also \cite{Ne},
\cite{Tar} and \cite{Ad}, the later for more regular domains). It expresses the continuity of the embedding
$W^{1,1}(\Om)\subset L^1(\p\Om)$ while $u$ in the surface
integral has the status of the `trace' (the restriction) of the function on the boundary $\p\Om$.
Moreover it follows from a standard approximation argument (see Theorem \ref{t1} below)
that \lae{i1} extends to the space $BV(\Om)$ (Section \ref{s:2}) with exactly the same values
of the constants $C_i$. In this context, a more ambitious version of \lae{i1} was stated
in \cite[Sec. 3]{AnG} by allowing domains $\Om$ whose boundary satisfies $\mathcal H_{N-1}(\p\Om)<\infty$,
$\mathcal H_{N-1}$ standing for the $N-1$ dimensional Hausdorff measure, and exhibits a suitable local behavior
(see \cite{AFP} for a more recent account).

With respect to the constant $C_1$ in \lae{i1}, a preliminary analysis reveals that it must satisfy the restriction
$C_1\ge 1$ (Section \ref{s:2}). This fact highly contrasts with the corresponding inequality for the exponent $p>1$,
$$
\int_{\p\Om}|u|^p \le \e \int_\Om |Du|^p +C_\e \int_\Om |u|^p,
$$
which holds for every $u\in \sob(\Om)$, and where $\e>0$ can be taken as small as desired.

Moreover, provided that $\Om$ is $C^1$ then it can be shown that (\cite{Mot} and Section \ref{s:2} below),
\begin{equation}
\label{i2}
\inf{C_1} = 1,
\end{equation}
where the infimum is extended to all constants $C_1>0$ such that inequality \lae{i1} holds for some $C_2>0$.
A similar statement in this direction had been earlier suggested  in \cite{AnG} by considering a wider class
of domains encompassing the Lipschitzian ones. In this case unity in \lae{i2} should be replaced by a larger
minimizing constant (see Example \ref{eje0} below).

It is therefore a natural question to decide if the better possible value $C_1 = 1$ could be placed in \lae{i1}.
This paper is devoted to elucidate the optimum smoothness requirements on $\p \Om$ under which the answer is affirmative.
For the time being, we anticipate that it is not enough for
$\Om$ to be merely $C^1$ (not even $C^{1,\al}$ for every $0 < \al <1$) in order that this
feature to be true.

On the other hand, to clarify this technical subtlety turns out to be crucial for the validity of a
further important property. Namely, the lower semicontinuity in $BV(\Om)$ of the functional,
\begin{equation}
\label{i3}
J(u) = \int_\Om |Du| + \int_{\p\Om}F(x,u),
\end{equation}
with respect to $L^1(\Om)$. Provided that $F$ is Lipschitzian in $u$ with a constant
not greater than unity it was shown in \cite{Mod} that the property holds true (see Theorem \ref{t3} below). Most importantly, the
proof in \cite{Mod} requires
the validity of inequality \lae{i1} exactly with the critical value $C_1=1$. Author just reports (\cite[p. 493]{Mod}):
``{\it Let us remark that $C_1=1$ because $\p\Om$ is smooth}''.
He quotes \cite{AnG} as a source where however there is no any
reference to this precise concern (see also Remark \ref{o:5} below).

The importance of the lower semicontinuity of $J$ stated in  \cite{Mod}
becomes evident when dealing with boundary value problems involving the
one--Laplacian operator $\De_1 u = \text{\rm div}\iz(\dfrac{Du}{|Du|}\der)$. Specially,
when flow--type conditions are imposed on $\p\Om$. See for instance \cite{DNOT}, \cite{MRS},
\cite{AMMo}, \cite{HauM}, \cite{LaSe}, \cite{DOSe}, \cite{SS_24}, \cite{BDP}.
In most of these works such a property is
invoked under the assumption ``$\Om$ is smooth''. Nevertheless, it is widely
believed that the semicontinuity of $J$ still holds
true if $\Om$ is just $C^1$. Thus, it should be desirable to get an insight on the
precise smoothness
degree required to $\p\Om$ for this property to be fulfilled. As a matter of fact, it will
be also seen in Section \ref{s:4} that class $C^1$ is not enough.

The work is organized as follows. Section \ref{s:21} recalls the background on the space $BV(\Om)$
needed in this work. The geometry of the domain $\Om$ near the boundary $\p\Om$ is most relevant
when addressing an optimum version of \lae{i1}. It is discussed in Section \ref{s:22} where
a suitable coordinate system defined in a neighborhood of $\p\Om$ is introduced. We also review in Sections \ref{s:23}
and \ref{s:24} the specific results of \cite{Mot} and \cite{Mod} concerned with our study here. Main result
Theorem \ref{t1} is stated in Section \ref{s:3}. Some examples analyzing the smoothness of
the domain needed for the validity of
several of the discussed results are presented in Section \ref{s:4}.

\section{Basic and technical previous results}\label{s:2}

\subsection{Functions of bounded variation}\label{s:21}
Let $\Om\subset \eren$ be a bounded domain where $L^1(\Om)$ designates the standard space of Lebesgue
integrable functions. As a proper subspace, $BV(\Om)\subset L^1(\Om)$ stands for the functions
of bounded variation. Namely, those $u\in L^1(\Om)$ whose gradient $Du$ in distributional sense defines
a Radon measure. By designating as $\|Du\|$ its associated `total variation' measure it is also
required that $\|Du\|(\Om) < \infty$. Such a  number is often represented
as,
$$
\|Du\|(\Om) = \int_\Om |Du|.
$$
Space $BV(\Om)$ becomes complete (Banach) when endowed with the norm
$\|u\|_{BV(\Om)} = \|u\|_{L^1(\Om)}+\|Du\|(\Om)$.
As a  further concept linked to the theory, it is said that $E\subset \Om$ is a set of finite perimeter in $\Om$ if
$\chi_E\in BV(\Om)$, $\chi_E$ designating the characteristic function of $E$.
In this case $\|D\chi_E\|(\Om)$ defines the relative perimeter $P(E,\Om)$ of $E$ with
respect to $\Om$. Let just remark to clarify  that provided $Q\subset \eren$ is
a Lispchitz domain with $Q\cap\Om$ nonempty then $P(Q\cap \Om,\Om)$ is given by
$\mathcal H_{N-1}(\p Q\cap \Om)$. This quantity is
also denoted as $|\p Q\cap \Om|_{N-1}$.

By assuming that $\Om$  is a Lipschitz domain then $BV(\Om)$ possess a trace operator
$\g:BV(\Om)\rightarrow L^1(\p\Om)$ which coincides with the restriction of functions
to $\p\Om$ when observed in $C^1(\Omb)$. By an abuse of notation $u$ will be used instead of $\g(u)$ to mean
the trace of $u$ in $\p\Om$. The trace operator is continuous when convergence
in $BV(\Om)$ is observed in the weaker sense of the {\it strict topology}. Namely, $u_n\to u$
strictly in $BV(\Om)$ if $u_n\to u$ in $L^1(\Om)$ while $\|Du_n\|(\Om)\to \|Du\|(\Om)$ (\cite{AFP}).
Strict topology turns out to be quite convenient when dealing with approximation affairs in $BV(\Om)$.
On the other hand, inequality \lae{i1} extends easily to $BV(\Om)$ in the way described in the
proof of Theorem \ref{t1} below.

\subsection{Tubular neighborhoods}\label{s:22}

The analysis in Section \ref{s:3} makes use of a smart coordinate system defined near $\p \Om$.
It is formed through a suitable family of normal segments which fills the space surrounding the
boundary of $\Om$. To this aim we are introducing some few definitions and corresponding
properties.

Assume that $\Om$ is a bounded domain such that $\p \Om$ is a compact $C^1$ and  $N-1$ dimensional manifold.
Then it is possible to define a unit normal field $\bn$ attached to $\p\Om$ which is directed towards $\Om$.
We are assuming that $\p\Om$ is oriented with such a field $\bn$.

Any $x\in \eren$ has at least a closer point $y\in \p\Om$ which means that $d(x)=|x-y|$ where,
$$
d(x)=\dist(x,\p\Om)=\inf_{y'\in \p\Om} |x-y'|.
$$
To every $y\in \p\Om$ a number $r(y)$, the {\it reach} of $y$ (\cite{Fe}), is defined as,
\begin{multline*}
r(y) = \\
\sup\{r>0:\, \text{$\forall x\in B(y,r)$ a unique
 $z\in\p\Om$ exists such}\\ \text{that $d(x)=|x-z|$}\}.
\end{multline*}
Geometrically, $r(y)>0$ entails the existence of an inner and an outer tangent ball to $\p\Om$ at $y$,
$\overline{B}\menos\uno{y}\subset \Om$ and $\overline{B'}\menos\uno{y}\subset \eren\menos\Omb$ respectively,
both of them with a radius $0 < r < r(y)$. Thus $r(y)^{-1}$ may be regarded as a sort of generalized
curvature of $\p\Om$ at $y$ while $r(y)$ has the status of a  curvature radius.

It follows from the implicit function theorem that $r(y)>0$ on $\p\Om$ provided it is a $C^2$ manifold.
On the contrary, $r$ could possibly vanish
somewhere in $\p\Om$ if it is only of class $C^{1,\al}$ for $0 <\al<1$. An example of this instance is presented
in Remark \ref{o:6}. That is why the next statement may be regarded as {\it optimal}.

\begin{teorema}
\label{t0}
Assume $\Om\subset \eren$ is a bounded $C^1$ domain, i. e. $\p\Om$ is a compact and $C^1$ $N-1$ dimensional
manifold. Then, the following assertions are equivalent:

\lai a $r(y)>0$ for every $y\in\p\Om$,

\lai b $\p\Om$ is $C^{1,1}$, i. e. $\Om$ is a class $C^{1,1}$ bounded domain.
\end{teorema}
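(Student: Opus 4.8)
The plan is to prove the two implications through a common intermediate notion, the \emph{uniform two--sided tangent ball condition}: at every $y\in\p\Om$ there are an inner and an outer ball of a fixed radius $\rho>0$ tangent to $\p\Om$ at $y$. I would show $C^{1,1}\Rightarrow$ uniform ball condition $\Rightarrow$ positive reach for (b)$\Rightarrow$(a), and the reverse chain for (a)$\Rightarrow$(b).

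For (b)$\Rightarrow$(a) I would fix $y_0$, write $\p\Om$ near $y_0$ as a graph $x_N=\phi(x')$ with $\phi(0)=0$, $\na\phi(0)=0$ (axes chosen so that $\bn(y_0)=e_N$), and use that $\na\phi$ is $L$--Lipschitz. The first order Taylor estimate $|\phi(x')-\phi(z')-\na\phi(z')\cdot(x'-z')|\le\frac L2|x'-z'|^2$ traps the graph between two paraboloids at each of its points, which yields an inner and an outer tangent ball of a radius $\rho>0$ depending only on $L$. Covering the compact $\p\Om$ by finitely many charts and keeping the smallest radius gives the uniform ball condition; it is standard that this forces a unique nearest point for every $x$ with $\dist(x,\p\Om)<\rho$, so $r(y)\ge\rho>0$ for all $y$, which is even stronger than (a).

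For (a)$\Rightarrow$(b) the first task is to promote the pointwise hypothesis to the uniform bound $\rho_0:=\inf_{\p\Om}r(y)>0$. I would argue by contradiction: if $r(y_n)\to0$ with $y_n\to y_0\in\p\Om$ (compactness), then by the definition of the reach there are points $x_n$ with $|x_n-y_n|<r(y_n)+1/n$ possessing two distinct nearest points on $\p\Om$; since $\dist(x_n,\p\Om)\le|x_n-y_n|\to0$ we get $x_n\to y_0$, so eventually $x_n\in B(y_0,r(y_0))$, contradicting that every point of that ball has a unique nearest point. Hence $\rho_0>0$. The geometric reading of the reach then supplies inner and outer tangent balls of any radius $\rho<\rho_0$ at every point, and comparing an arbitrary $Q\in\p\Om$ with the two balls built on the normal at $P$ gives the pinching inequality $|\langle Q-P,\bn(P)\rangle|\le|Q-P|^2/(2\rho)$ for all $P,Q\in\p\Om$.

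The hard part is the final passage from this geometry to genuine $C^{1,1}$ regularity. The pinching inequality controls only the component of $\bn(P)-\bn(Q)$ \emph{along the chord} $Q-P$, whereas $C^{1,1}$ demands a full Lipschitz bound for the Gauss map $y\mapsto\bn(y)$; bridging this gap is the main obstacle. I would do it through the nearest--point projection $\xi$, invoking Federer's theory (\cite{Fe}): reach $\ge\rho$ makes $\xi$ Lipschitz on each set $\{\dist(\cdot,\p\Om)\le s\}$ with $s<\rho$, whence the signed distance $b$ (say $b<0$ inside $\Om$) is $C^{1,1}$ on the tube $\{|b|<\rho\}$ and $\na b$ extends continuously to $\p\Om$ as the unit normal. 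Since $|\na b|\equiv1$ on $\{b=0\}$, the $C^{1,1}$ implicit function theorem exhibits $\p\Om$ locally as the graph of a $C^{1,1}$ function, which is (b) and closes the equivalence.
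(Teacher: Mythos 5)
Your implication (b)$\Rightarrow$(a) is correct and is essentially the standard argument: it reconstructs what the paper simply delegates to \cite[Lem. 4.11]{Fe} and \cite[Lem. 2.1]{Luc} (the Lipschitz bound on $\na\phi$ traps the graph between paraboloids, hence uniform two--sided tangent balls, and such balls of radius $\rho$ force uniqueness of the nearest point in $\{\dist(\cdot,\p\Om)<\rho\}$). Likewise, your compactness argument upgrading the pointwise hypothesis (a) to $\rho_0=\inf_{\p\Om}r>0$ is sound, and so is the derivation of the pinching inequality $|\langle Q-P,\bn(P)\rangle|\le |Q-P|^2/(2\rho)$ from the two tangent balls.

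The gap sits exactly at the step you yourself flag as the main obstacle, in (a)$\Rightarrow$(b). You claim that, by \cite{Fe}, reach $\ge \rho$ makes the nearest--point projection $\xi$ Lipschitz on each tube $\{\dist(\cdot,\p\Om)\le s\}$, $s<\rho$, ``whence the signed distance $b$ is $C^{1,1}$ on the tube''. That ``whence'' is unjustified and, as stated, circular. What the Lipschitz estimate for $\xi$ actually yields is that the \emph{squared} distance is $C^{1,1}$ there, because $\na(d^2/2)=\mathrm{id}-\xi$; it gives no control on the modulus of continuity of the direction field $\na b=\bn\circ\xi$ across $\p\Om$. Since $\xi=\mathrm{id}$ on $\p\Om$, Lipschitz continuity of $\na b$ on the tube, restricted to $\p\Om$, \emph{is} Lipschitz continuity of the Gauss map $\bn$ --- i.e.\ precisely assertion (b). Federer's Theorem 4.8 contains no such statement (note also that the paper's Theorem \ref{t00}, item (iv), asserts only $\tilde d\in C^1$, citing \cite{Krap}, even when $\Om$ is already assumed $C^{1,1}$); this is why the paper refers for this direction to \cite[Th. 4.1]{Luc} and \cite[Cor. 2]{LeP} rather than to \cite{Fe}. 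The bridge must be built by hand, and it is elementary: write $\p\Om$ locally as a graph $x_N=\phi(x')$, so that the pinching inequality becomes the uniform quadratic Taylor bound $|E(x',z')|\le C|x'-z'|^2$, where $E(a,b)=\phi(a)-\phi(b)-\na\phi(b)\cdot(a-b)$; then use the three--point identity
$$
\bigl(\na\phi(x')-\na\phi(z')\bigr)\cdot(w-z')=E(w,z')+E(z',x')-E(w,x'),
$$
choosing $w=z'+|x'-z'|\,e$ with $e$ the unit vector along $\na\phi(x')-\na\phi(z')$, to get $|\na\phi(x')-\na\phi(z')|\le 6C|x'-z'|$. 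This upgrades the chord--direction control to the full Lipschitz bound on $\na\phi$, which is (b). With this replacement your proof closes; as written, it does not.
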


That b) is sufficient for a) is shown either in \cite[Lem. 4.11]{Fe} or \cite[Lem. 2.1]{Luc}.
For the necessity of b) reader is referred to \cite[Th. 4.1]{Luc} (cf. also \cite[Cor. 2]{LeP}).

It can be further shown that function $r(y)$ is continuous. Since $\p\Om$ is compact it follows that,
$$
r_{\p\Om}:= \inf_{y\in \p\Om}r(y)>0,
$$
whenever option a) holds.  Moreover,
$$
r_{\p\Om}^{-1} = \sup_{x,y\in\p\Om\,, x\neq y}\frac{|\bn(x)-\bn(y)|}{|x-y|}.
$$
If this is the case,  a projection mapping,
$$
\pi:\lau \longrightarrow \p\Om,
$$
is defined such that $|x-\pi(x)|= d(x)$ for all $x\in \lau$, where $\lau = \uno{x:d(x)<r_{\p\Om}}$.
The more important features linking the previous definitions are gathered in
the following statement.

\begin{teorema}
\label{t00} Let $\Om\subset \eren$ be a bounded $C^{1,1}$ domain, i. e.
$\p\Om$ constitutes a $C^{1,1}$ and compact $N-1$ dimensional manifold which is
oriented with the inward unit normal $\bla n$. Then,

\lai i $\pi \in C^{0,1}(\lau,\eren)$.

\lai{ii} For each $y\in \p\Om$, $\pi^{-1}(y) = \uno{y+t \bla n(y):\ |t|<r_{\p\Om}}$.

\lai{iii} If $\ene = \uno{(p,v):\, p\in\p\Om\, , v\in \text{\rm span}\,\uno{\bla n(p)}}$ stands
for the normal bundle to $\p\Om$ then the mapping
$$
\map{H}{\lau}{\ene}x{(y,v)=(\pi(x),x-\pi(x)),}
$$
defines a Lipschitz homeomorphism onto its image, with Lipschitz inverse
defined as $H^{-1}(y,v)=y+v$.

\lai{iv} The {\rm signed} distance,
$$
\tilde d(x) = (x-\pi(x))\cdot \bla n(\pi(x))
=
\begin{cases}
\, d(x),&\qquad x\in \Omb,\\
-d(x),&\qquad x\in\eren\menos\Omb,
\end{cases}
$$
is $C^1$ in $\lau$.
\end{teorema}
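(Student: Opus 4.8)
The plan is to base the whole argument on one quantitative consequence of positive reach, a two--sided ``curvature'' estimate for $\p\Om$. Writing $a:=r_{\p\Om}>0$, the inner and outer tangent balls of radius $a$ at any $y\in\p\Om$ (obtained as limits of the admissible tangent balls of radius $r<r(y)$) contain no further boundary point in their open interior; squaring the resulting inequalities $|z-(y\pm a\,\bn(y))|\ge a$ for $z\in\p\Om$ gives
\[
|\bn(y)\cdot(z-y)|\le \frac{1}{2a}\,|z-y|^2\qquad (y,z\in\p\Om),
\]
which is the infinitesimal form of the bound $r_{\p\Om}^{-1}=\sup|\bn(x)-\bn(y)|/|x-y|$ already recorded. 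This inequality is the engine of everything that follows.

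For (ii), the first--order optimality condition for minimizing $|x-\cdot|^2$ over the $C^1$ manifold $\p\Om$ forces $x-\pi(x)$ to be orthogonal to $T_{\pi(x)}\p\Om$, hence $x-\pi(x)=t\,\bn(\pi(x))$ with $|t|=d(x)<a$; this yields the inclusion ``$\subseteq$''. For ``$\supseteq$'', given $y\in\p\Om$ and $0<t<a$ I would note that $y+t\,\bn(y)$ is the centre of an inner tangent ball of radius $t$ contained in $\Om$, so its distance to $\p\Om$ equals $t$ and is attained only at $y$; the case $t<0$ is symmetric with the outer ball, and $t=0$ is trivial.

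Part (i) is the technical core. With $y_i=\pi(x_i)$ and $x_i=y_i+t_i\,\bn(y_i)$, $t_i=\tilde d(x_i)$, I would expand $(x_1-x_2)\cdot(y_1-y_2)$ and control the two cross terms $t_i\,\bn(y_i)\cdot(y_1-y_2)$ by the curvature estimate, obtaining
\[
(x_1-x_2)\cdot(y_1-y_2)\ \ge\ \Big(1-\frac{|t_1|+|t_2|}{2a}\Big)|y_1-y_2|^2 .
\]
Since $|t_i|=d(x_i)$, Cauchy--Schwarz then gives $|\pi(x_1)-\pi(x_2)|\le \frac{a}{a-s}\,|x_1-x_2|$ on each slab $\{\,d\le s\,\}$, $s<a$, so $\pi$ is Lipschitz on compact subsets of $\lau$, i.e.\ $\pi\in C^{0,1}(\lau)$ (the constant genuinely degenerates as $s\uparrow a$, as the sphere---whose projection blows up near its centre---shows, so $C^{0,1}(\lau)$ is read in the local sense). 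Part (iii) is then immediate: by (ii) the map $H$ is a bijection onto its image with inverse $H^{-1}(y,v)=y+v$, which is globally Lipschitz, while $H=(\pi,\mathrm{id}-\pi)$ inherits local Lipschitzness from (i).

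Finally, for (iv) the sign formula follows at once from (ii): $x-\pi(x)=t\,\bn(\pi(x))$ gives $\tilde d(x)=t$, and since the segment joining $\pi(x)$ to $x$ cannot cross $\p\Om$, the sign of $t$ records whether $x$ lies on the inward ($\Om$) or the outward side. For the $C^1$ regularity I would establish the gradient formula $\nabla\tilde d(x)=\bn(\pi(x))$. At a point $x_0$ with $d(x_0)>0$ this is the classical differentiability of the distance where the nearest point is unique: comparing the upper bound $d(x)\le|x-y_0|$ with the lower bound coming from $d(x_0)\le|x_0-y_x|$ (where $y_x=\pi(x)$) and using the continuity of $\pi$ from (i) to send $y_x\to y_0$ yields $\nabla d(x_0)=(x_0-\pi(x_0))/d(x_0)=\pm\bn(\pi(x_0))$, whence $\nabla\tilde d(x_0)=\bn(\pi(x_0))$ once the sign is accounted for. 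At a boundary point $x_0\in\p\Om$ the expansion is obtained directly: decomposing $\tilde d(x)-\bn(x_0)\cdot(x-x_0)=(x-\pi(x))\cdot(\bn(\pi(x))-\bn(x_0))-\bn(x_0)\cdot(\pi(x)-x_0)$ and bounding both terms by the curvature estimate together with $|x-\pi(x)|=d(x)\le|x-x_0|$ shows the difference is $O(|x-x_0|^2)=o(|x-x_0|)$. Since $\pi$ is continuous and $\bn$ continuous on $\p\Om$, the identity $\nabla\tilde d=\bn\circ\pi$ makes $\nabla\tilde d$ continuous, so $\tilde d\in C^1(\lau)$ (in fact $C^{1,1}$, as $\bn\circ\pi$ is Lipschitz). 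The step I expect to be most delicate is this differentiability of $\tilde d$ up to and on $\p\Om$, where the foot point degenerates; the curvature estimate is precisely what rescues the argument.
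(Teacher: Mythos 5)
Your proposal is correct, but it takes a genuinely different route from the paper: the paper does not argue Theorem \ref{t00} directly at all --- its proof is a citation, namely items 8), 2) and 13) of Federer's Theorem 4.8 in \cite{Fe} for assertions i)--iii), and Theorem 1 of Krantz--Parks \cite{Krap} for iv). What you have done is reconstruct, in a self-contained way, the content of those references from a single quantitative consequence of positive reach, the two-sided estimate
$$
|\bla n(y)\cdot(z-y)|\le \tfrac{1}{2r_{\p\Om}}\,|z-y|^2,\qquad y,z\in\p\Om,
$$
obtained by letting the radii of the inner/outer tangent balls tend to $r_{\p\Om}$ (your nested-balls limit argument for this is sound). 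Your Lipschitz bound for $\pi$, with constant $r_{\p\Om}/(r_{\p\Om}-s)$ on $\{d\le s\}$, is exactly Federer's constant in 4.8--8), and your gradient identity $\na\tilde d=\bla n\circ\pi$, proved separately off $\p\Om$ (differentiability of the distance at points of unique projection, using continuity of $\pi$) and on $\p\Om$ (the quadratic expansion), is precisely the Krantz--Parks theorem. Your reading of $C^{0,1}(\lau,\eren)$ as Lipschitz on every sub-tube $\{d\le s\}$, $s<r_{\p\Om}$, rather than globally on $\lau$, is the right one: global Lipschitzness genuinely fails (your sphere example), and the local statement is also what Federer's theorem provides. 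One small looseness, which is not a gap: in the boundary-point expansion for iv), the term $(x-\pi(x))\cdot(\bla n(\pi(x))-\bla n(x_0))$ is not controlled by the curvature estimate alone; you need the Lipschitz property of $\bla n$, which the paper records just before the theorem as $r_{\p\Om}^{-1}=\sup|\bla n(x)-\bla n(y)|/|x-y|$ --- or, alternatively, rewrite the term as $\tilde d(x)\bigl(1-\bla n(\pi(x))\cdot\bla n(x_0)\bigr)=\tfrac12\,\tilde d(x)\,|\bla n(\pi(x))-\bla n(x_0)|^2$, which is $o(|x-x_0|)$ by mere continuity of $\bla n\circ\pi$; either repair is immediate. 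As for what each approach buys: the paper's citation is economical and hands the reader Federer's full machinery, while your argument gains self-containedness and makes visible that one geometric input (the tangent-ball estimate) drives all four assertions --- very much in the spirit of how the paper itself exploits $r_{\p\Om}$ in the $C^{1,1}$ case of Theorem \ref{t1}.
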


\begin{proof}
 Assertions i), ii) and iii) are consequence of items 8), 2) and 13) in \cite[Th. 4.8]{Fe}
 while the fact that $\tilde d$ is $C^1$ in  $\lau$ is the conclusion of \cite[Th. 1]{Krap}
 which is in fact the main achievement of this reference.
\end{proof}

Theorem \ref{t00} implies that $\lau$ becomes a tubular neighborhood of $\p\Om$ (\cite{GuiP}) where
the coordinates of a point $x$ are $(\pi(x),x-\pi(x))$. Alternatively, they can be expressed in the
more manageable form $(\pi(x),\tilde d(x))$ where vector $x-\pi(x)$ is replaced
by the number $\tilde d(x)$.

Let now $\uno{(g_i,U_i)}$ be a finite atlas  for $\p\Om$. This means that for
every $\rango 1im$ the set $U_i\subset \R^{N-1}$
is open, $g_i\in C^{1,1}(U_i,\eren)$ is one to one and $\text{rank}\, Dg_i(s)=N-1$ for $s\in U_i$.
Finally,
$$
\p \Om = \cup_{i=1}^m g_i(U_i).
$$
Define,
\begin{equation}\label{e40}
\map{h_i}{U_i\times (-r_{\p\Om},r_{\p\Om})\subset \R^{N}}\eren{(s,t)}{h_i(s,t)= g_i(s)+t\bn(g(s_i)).}
\end{equation}
Then $h_i$ constitutes a Lipschitz homeomorphism from $U_i\times (-r_{\p\Om},r_{\p\Om})$
onto its image $V_i\subset \lau$, with a Lipschitz inverse $h_i^{-1}$. Furthermore,
$$
\lau = \cup_{i=1}^m V_i.
$$
Therefore, pair $(s,t)$ represents the local expression in $V_i$ of the tubular coordinates $(\pi(x),\tilde d(x))$
introduced above. In fact, $g_i(s)=\pi(x)$, $t = \tilde d(x)$ for $x\in V_i$.

\subsection{Optimum constant in \lae{i1}}\label{s:23}

We begin by observing a basic estimate for $C_1$. In the next result $\Om$ is only assumed
to be Lipschitzian.

\begin{lema}\label{l1}
Assume that $\Om$ is a bounded Lipschitz domain. Then, constant $C_1$ in the trace inequality \lae{i1}
satisfies the lower estimate $C_1\ge 1$.
\end{lema}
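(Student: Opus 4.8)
The plan is to produce a one–parameter family of admissible test functions that concentrate in a thin inner collar of $\p\Om$, so that the boundary integral on the left of \lae{i1} stays equal to the surface measure of $\p\Om$ while the total variation approaches that same quantity and the zeroth order term becomes negligible. Concretely, writing $d(x)=\dist(x,\p\Om)$ --- a Lipschitz function with $|\nabla d|\le 1$ a.e. and in fact $|\nabla d|=1$ a.e. on $\uno{d>0}$ --- I would take, for small $\e>0$, the Lipschitz (hence $BV$) functions
$$
u_\e(x)=\iz(1-\frac{d(x)}{\e}\der)^+,
$$
which equal $1$ on $\p\Om$ and are supported in the collar $\Om_\e=\uno{x\in\Om:\ d(x)<\e}$.

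Next I would evaluate the three terms of \lae{i1} on $u_\e$. Since $d\equiv 0$ on $\p\Om$ the trace of $u_\e$ is identically $1$, so $\int_{\p\Om}|u_\e|=\mathcal H_{N-1}(\p\Om)$. From $0\le u_\e\le \chi_{\Om_\e}$ one gets $\int_\Om|u_\e|\le |\Om_\e|$. Finally $\nabla u_\e=-\e^{-1}\nabla d$ a.e. on $\Om_\e$ and vanishes elsewhere, so $|\nabla u_\e|=\e^{-1}$ a.e. there and $\int_\Om|Du_\e|=|\Om_\e|/\e$. Plugging these into \lae{i1} gives
$$
\mathcal H_{N-1}(\p\Om)\le C_1\,\frac{|\Om_\e|}{\e}+C_2\,|\Om_\e|.
$$

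The conclusion then follows by letting $\e\to 0^+$: since $\Om$ is Lipschitz, $|\Om_\e|\to 0$ and, crucially, the inner $(N-1)$--dimensional Minkowski content of the boundary coincides with its Hausdorff measure, that is $|\Om_\e|/\e\to\mathcal H_{N-1}(\p\Om)$. Passing to the limit yields $\mathcal H_{N-1}(\p\Om)\le C_1\,\mathcal H_{N-1}(\p\Om)$, and since $0<\mathcal H_{N-1}(\p\Om)<\infty$ this forces $C_1\ge 1$. The main obstacle I anticipate is precisely this last limit: establishing $|\Om_\e|/\e\to\mathcal H_{N-1}(\p\Om)$ rigorously. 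It is exactly here that Lipschitz regularity of $\p\Om$ enters --- via the coarea identity $|\Om_\e|=\int_0^\e \mathcal H_{N-1}(\uno{d=t}\cap\Om)\,dt$ together with the convergence of the parallel surface areas $\mathcal H_{N-1}(\uno{d=t}\cap\Om)\to\mathcal H_{N-1}(\p\Om)$ as $t\to 0^+$ --- and this is the step that genuinely uses the hypothesis rather than elementary estimates.
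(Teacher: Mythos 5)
Your route is genuinely different from the paper's, and its overall architecture is sound. The paper also concentrates test data in a thin inner collar, but it does so with $BV$ characteristic functions: by the Massari--Pepe approximation theorem, a bounded Lipschitz domain admits smooth subdomains $\Om_n\subset\subset\Om$ with $\chi_{\Om_n}\to\chi_\Om$ in $L^1(\Om)$ and $P(\Om_n,\eren)\to P(\Om,\eren)$; inserting $u_n=\chi_{\Om\setminus\Omb_n}$ into \lae{i1} gives
$$
\mathcal H_{N-1}(\p \Om)\le C_1\,P(\Om_n,\eren)+C_2\,|\Om\setminus\Omb_n|,
$$
and $C_1\ge 1$ follows in the limit. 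Since those test functions are only $BV$, the paper implicitly uses that \lae{i1} extends to $BV(\Om)$ with the same constants (justified via strict approximation in the proof of Theorem \ref{t1}). Your wedge profiles $u_\e=(1-d/\e)^+$ are Lipschitz, hence admissible in \lae{i1} exactly as stated for $W^{1,1}(\Om)$, which is a small genuine advantage; and your evaluation of the three terms (trace identically $1$, $\int_\Om|u_\e|\le|\Om_\e|$, and $\int_\Om|Du_\e|=|\Om_\e|/\e$ from $|\nabla d|=1$ a.e.\ on $\uno{d>0}$) is correct.

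The weak point is precisely the step you flag, and your sketch of it points at the wrong mechanism. Your argument needs only the \emph{upper} bound $\limsup_{\e\to0^+}|\Om_\e|/\e\le\mathcal H_{N-1}(\p\Om)$. The coarea identity plus lower semicontinuity of the perimeter yields the opposite, useless inequality: since $\p\uno{d>t}\cap\Om\subset\uno{d=t}\cap\Om$ and $\chi_{\uno{d>t}}\to\chi_\Om$ in $L^1$, one gets $\liminf_{t\to0^+}\mathcal H_{N-1}(\uno{d=t}\cap\Om)\ge P(\Om,\eren)$, hence $\liminf|\Om_\e|/\e\ge\mathcal H_{N-1}(\p\Om)$. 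The convergence of the parallel areas \emph{from above}, which is what you actually invoke, is not an elementary consequence of Lipschitz regularity --- it is essentially equivalent to the statement you are trying to prove. What you need is the known but nontrivial fact that the \emph{inner Minkowski content} of a Lipschitz boundary equals $\mathcal H_{N-1}(\p\Om)$. One correct way to get it: Federer's theorem on the Minkowski content of compact $(N-1)$-rectifiable sets gives the two-sided limit $|\uno{\dist(\cdot,\p\Om)<\e}|/(2\e)\to\mathcal H_{N-1}(\p\Om)$, and the one-sided statement follows by applying the easy liminf bound above to the inner \emph{and} outer collars separately and subtracting (this uses that $\eren\setminus\Omb$ is also Lipschitz with the same perimeter); alternatively, cite the result of Ambrosio--Colesanti--Villa on Minkowski content of sets with Lipschitz boundary. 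With that ingredient supplied, your proof is complete; as written, the crucial limit is asserted rather than proved. In fairness, each proof then rests on exactly one nontrivial external result: yours on the Minkowski content identity, the paper's on Massari--Pepe.
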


\begin{proof}
If $\Om$ is Lipschitizan then there exists an increasing sequence
$\Om_n\subset\subset \Om_{n+1}\subset\subset \Om$ of $C^\infty$ strict
subdomains of $\Om$ such that $\chi_{\Om_n}\to \chi_\Om$ in $L^1(\Om)$,
while,
$$
P(\Om_n,\eren)\to P(\Om,\eren).
$$
See for instance \cite[Th. 1]{MPe}. Define $u = u_n = \chi_{Q_n}$,
$Q_n = \Om\setminus \Omb_n$, $\chi_{Q_n}$ meaning the characteristic
function of $Q_n$. By inserting $u_n$ in \lae{i1} we get,
$$
\mathcal H_{N-1}(\p \Om)\le C_1 \int_\Om |D\chi_{Q_n}|+ C_2 |Q_n| = C_1 P(Q_n,\Om) + C_2|Q_n|,
$$
where $|Q_n|$ designates the Lebesgue measure of $Q_n$. Then,
$$
\mathcal H_{N-1}(\p \Om)\le C_1 P(\Om_n,\eren)+ C_2|Q_n|.
$$
We achieve $C_1\ge 1$ after taking limits.
\end{proof}

\begin{obss}
\rm
Estimate in Lemma 1 is obtained in \cite{Mot} when $\Om$ is a smooth domain.
For the sake of completeness, it is shown in Section \ref{s:3} that an argument as the one in
Lemma \ref{l1} can be carried out with more regular functions $u_n$ provided
$\Om$ is $C^1$.
\end{obss}

Next statement asserts that $C_1$ can be chosen as close to unity as desired when $\Om$
is $C^1$ (\cite[Th. 1.7]{Mot}). For the interested reader, an independent proof of the
result can be given with the tools of Section \ref{s:3} and is therefore presented
there.

\begin{teorema}\label{t2}
Assume that $\Om$ is a bounded $C^1$ domain. Then, for every $\e>0$ there exists $C_2= C_2(\e)>0$
such that,
\begin{equation}
\label{2:1}
\int_{\p\Om}|u| \le (1+\e)\int_\Om |Du|+ C_2\int_\Om |u|,
\end{equation}
holds for all $u\in W^{1,1}(\Om)$.
\end{teorema}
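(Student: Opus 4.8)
The plan is to reduce to classical functions and to localize $\p\Om$ by $C^1$ graphs on which the surface Jacobian is uniformly close to $1$; the improvement over a generic constant stems solely from the fact that a $C^1$ hypersurface is locally almost flat. Note first that the tubular machinery of Section \ref{s:22} is \emph{not} available here, since Theorem \ref{t00} requires $\p\Om\in C^{1,1}$, which is why I keep the argument self-contained through local graphs. It is enough to establish \eqref{2:1} for $u\in C^1(\Omb)$: this space is dense in $W^{1,1}(\Om)$ for the norm and the trace is continuous (by the already known inequality \eqref{i1} for Lipschitz domains), so a fixed-constant inequality passes to the limit. Since only $|u|$ occurs, no sign condition on $u$ is needed.

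Fix $\e>0$ and pick $\de>0$ with $\sqrt{1+\de^2}\le 1+\e$. As $\p\Om$ is a compact $C^1$ manifold, about each $p\in\p\Om$ one may rotate the axes so that $\bn(p)$ points along $+x_N$ and $\p\Om$ is the graph $x_N=\g_p(x')$ of a $C^1$ function with $\na\g_p(p')=0$; by continuity of $\na\g_p$ there is a cylindrical neighborhood $W_p$, with $\Om\cap W_p=\{x_N>\g_p(x')\}$, on which $|\na\g_p|\le\de$. Compactness provides finitely many such charts $W_1,\dots,W_m$ covering $\p\Om$, with graphs $\g_j$ satisfying $|\na\g_j|\le\de$. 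I then adjoin an interior open set $W_0$ with $\overline{W_0}\subset\Om$ so that $\{W_0,\dots,W_m\}$ covers $\Omb$, and take a smooth partition of unity $\{\z_j\}_{j=0}^m$, $\z_j\ge 0$, $\sum_{j=0}^m\z_j\equiv 1$ on $\Omb$, subordinate to the cover.

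For $j\ge 1$, in the graph coordinates of $W_j$ set $w(x',t)=(\z_j u)(x',\g_j(x')+t)$. Since $\z_j u$ is compactly supported in $W_j$, $w(x',\cdot)$ vanishes for large $t$, whence $(\z_j u)(x',\g_j(x'))=-\int_0^\infty \p_{x_N}(\z_j u)(x',\g_j(x')+t)\,dt$. Taking moduli, integrating against the surface element $\sqrt{1+|\na\g_j|^2}\,dx'$ and using the Jacobian-one change of variables $(x',t)\mapsto(x',\g_j(x')+t)$ yields $\int_{\p\Om}\z_j|u|\le(1+\e)\int_{\Om\cap W_j}|\p_{x_N}(\z_j u)|\,dx$. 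Now $|\p_{x_N}(\z_j u)|\le \z_j|Du|+|\na\z_j|\,|u|$, and $\z_0\equiv 0$ on $\p\Om$; summing over $j$ and using $\sum_{j\ge1}\z_j\le 1$ on $\Om$ together with $\sum_{j\ge1}|\na\z_j|\le K$ gives \eqref{2:1} with $C_2=(1+\e)K$.

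The one delicate point, and the crux of the result, is securing $|\na\g_j|\le\de$ simultaneously over a finite atlas: this is what forces the Jacobian factor $\sqrt{1+|\na\g_j|^2}$ below $1+\e$ everywhere, and it rests precisely on $C^1$ regularity, that is, on the uniform continuity of $\bn$ on the compact boundary. It is equally the step that cannot be pushed to $\de=0$, consistent with the later observation that class $C^1$ does not by itself permit the critical value $C_1=1$.
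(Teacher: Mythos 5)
Your proof is correct, and it takes a genuinely different route from the paper's. The paper also reduces to $u\in C^1(\Omb)$ via density and uses a finite boundary covering with a partition of unity, but from there it proceeds by mollifying the (merely continuous) inward normal $\bn$ into a smooth vector field $X$ with $|X-\bn|\le \eta$ near $\p\Om$, and then integrates along the \emph{flow lines} of $X$: the charts are $h_i(s,t)=\phi(t,g_i(s))$ with $\phi$ the flow of $X$, and the loss factor $C_1=\frac{1+2\eta}{1-\eta}$ comes from the two estimates $\tij(s,0)/J(s,0)=\bn\cdot X\ge 1-\eta$ (control of the flow Jacobian on $\p\Om$) and $|\p_t u_i|\le(1+2\eta)|Du_i|$ (since $|X(\phi(t,y))|\le 1+2\eta$). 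You instead flatten the boundary locally as a $C^1$ graph with $|\na\g_j|\le\de$ and integrate along straight vertical lines, via the shear $(x',t)\mapsto(x',\g_j(x')+t)$ whose Jacobian is exactly $1$; the only loss is the surface-element factor $\sqrt{1+|\na\g_j|^2}\le\sqrt{1+\de^2}$. Both arguments exploit the same essential fact — uniform continuity of $\bn$ on the compact $C^1$ boundary, which you correctly identify as the crux and as the reason the tubular machinery of Theorem \ref{t00} is unavailable. What each buys: your version is more elementary and self-contained (no ODE flow, no regularization of $\bn$, no Jacobian expansion in $t$) and yields the cleaner constant $\sqrt{1+\de^2}$ in place of $(1+2\eta)/(1-\eta)$; the paper's flow construction, on the other hand, deliberately parallels the curvilinear-coordinate scheme of Theorem \ref{t1} and is then reused verbatim to build the smooth test functions $u_n$ proving the sharpness bound $C_1\ge 1$ of Lemma \ref{l1} in $C^1$ domains, a payoff your graph charts would also give but which the paper gets for free from its setup.
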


\begin{obs}
\rm
As it appears in \cite{Mot}, Theorem \ref{t2} is not properly stated there
since it is asserted to be also valid for {\it piecewise} $C^1$ domains.
The proof argued in this work can only be valid if $\Om$ is $C^1$.
Indeed, for a piecewise $C^1$ but Lipschitz domain,
Example \ref{eje0} below shows that the optimum constant $C_1$ is bounded away from
unity.
\end{obs}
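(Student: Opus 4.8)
The plan is to deduce the global estimate \lae{2:1} from a purely one--dimensional inequality on normal segments, after localizing near $\p\Om$ by a partition of unity and flattening the boundary chart by chart. First I would reduce to the case $u\in C^1(\Omb)$: such functions are dense in $W^{1,1}(\Om)$ in norm, norm convergence forces strict convergence in $BV(\Om)$ (the total variations converge), and the trace operator is continuous along strict convergence; hence all three integrals in \lae{2:1} pass to the limit and the coefficient $1+\e$ is retained.

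Next I would fix a finite cover of $\p\Om$ by charts $V_i$ in which, after a rigid motion, $\p\Om\cap V_i$ is a graph $x_N=\phi_i(x')$ with $\phi_i\in C^1$ and $\Om\cap V_i=\{x_N>\phi_i(x')\}$, choosing the axes at the centre of each chart so that $\na\phi_i$ vanishes there. This is the single place where the $C^1$ hypothesis is indispensable: since $\na\phi_i$ is continuous and vanishes at the centre, shrinking $V_i$ forces $\sup_{V_i}|\na\phi_i|\le\de$ for any prescribed $\de>0$. With a subordinate partition of unity $\{\z_i\}$, $\sum_i\z_i\le 1$ on $\Om$ and $\sum_i\z_i\equiv 1$ near $\p\Om$, I split $\int_{\p\Om}|u|=\sum_i\int_{\p\Om}|\z_i u|$ and use $\sum_i|D(\z_i u)|\le|Du|+\big(\sum_i|D\z_i|\big)|u|$, so the cross terms $u\,D\z_i$ are absorbed into the coefficient $C_2$ of $\int_\Om|u|$ and never touch the coefficient of $\int_\Om|Du|$.

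The core is then the local estimate for $v=\z_i u$, supported in a single chart. For a.e. $x'$ I write $v(x',\phi_i(x'))=v(x',\phi_i(x')+t)-\int_0^t\p_{x_N}v(x',\phi_i(x')+s)\,ds$, average over $t\in(0,h)$, take absolute values, and integrate $\sqrt{1+|\na\phi_i|^2}\,dx'$ over the chart. The change of variables $(x',t)\mapsto(x',\phi_i(x')+t)$ has unit Jacobian and carries the collar $\{0<t<h\}$ into $\Om$, turning the two resulting terms into $\int_\Om|\p_{x_N}v|$ and $h^{-1}\int_\Om|v|$. Using $|\p_{x_N}v|\le|Dv|$ and $\sqrt{1+|\na\phi_i|^2}\le\sqrt{1+\de^2}$ I obtain
\[
\int_{\p\Om}|v|\le\sqrt{1+\de^2}\int_\Om|Dv|+\frac{\sqrt{1+\de^2}}{h}\int_\Om|v|.
\]
Choosing $\de$ so small that $\sqrt{1+\de^2}\le 1+\e$ yields the local estimate with the critical coefficient $1+\e$; summing over $i$ and absorbing the partition terms into $C_2$ finishes the argument.

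The main obstacle --- and the whole point of the theorem --- is to keep the coefficient of $\int_\Om|Dv|$ equal to $\sqrt{1+\de^2}$ rather than something larger. Two ingredients must cooperate: integrating along the fixed vertical direction (not the true normal) is free because $|\p_{x_N}v|\le|Dv|$, and the area element $\sqrt{1+|\na\phi_i|^2}$ must be uniformly close to $1$, which is exactly the uniform flatness $\sup|\na\phi_i|\le\de$ granted by continuity of $\na\phi_i$. For a merely Lipschitz boundary $\na\phi_i$ is only bounded and cannot be made small at a corner, so the area element stays bounded away from $1$ and the optimal constant exceeds $1$, in agreement with Example~\ref{eje0}. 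A minor technical point to watch is to take $h$ small enough that the collars over the supports of the $\z_i$ remain inside both $\Om$ and the charts, which is routine once the cover is fixed.
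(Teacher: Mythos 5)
Your argument is correct, and it establishes the substance of this remark --- inequality \lae{2:1} for $C^1$ domains, with the failure at corners delegated to Example \ref{eje0}, exactly as the paper does --- but by a genuinely different route from the paper's proof of Theorem \ref{t2}. The paper works with a general $C^1$ atlas $\{(g_i,U_i)\}$ and compensates for the lack of a usable normal coordinate system by regularizing the merely continuous inward normal $\bn$ into a smooth field $X$ with $|X-\bn|\le\eta$; the collar is then parametrized by the flow of $X$, $h_i(s,t)=\phi(t,g_i(s))$, the one--dimensional integration is run along the curvilinear flow lines, and the constant degrades through two separate effects: the tilt $\bn\cdot X\ge 1-\eta$ relating the Jacobian $\tij(s,0)$ to the surface measure (estimate \lae{e24}), and the speed bound $|X(\phi(t,y))|\le 1+2\eta$ (estimate \lae{e25}), yielding $C_1=(1+2\eta)/(1-\eta)$. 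You instead use the classical flattening device: graph charts $x_N=\phi_i(x')$ with axes aligned to the normal at the chart centre, so that continuity of $\na\phi_i$ gives the uniform flatness $|\na\phi_i|\le\de$; the integration lines are straight and vertical, the change of variables is a unit--Jacobian shear, and the entire loss is the single area--element factor $\sqrt{1+|\na\phi_i|^2}\le\sqrt{1+\de^2}$. Both proofs consume the $C^1$ hypothesis at the same conceptual point --- continuity of the normal makes the integration lines uniformly almost normal, equivalently the local graphs uniformly almost flat --- and your closing explanation of why this breaks at a Lipschitz corner is the right heuristic, though, as you implicitly acknowledge, failure of the method does not by itself bound $C_1$ away from $1$: that lower bound is what Example \ref{eje0} supplies. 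What your route buys is economy: no vector--field regularization, no ODE flow, no determinant computations. What the paper's route buys is reuse: the same flow construction is applied verbatim at the end of Section \ref{s:3} to prove Lemma \ref{l1} for $C^1$ domains, and it runs in parallel with the tubular--coordinate argument of Theorem \ref{t1}. Do keep explicit your final technical point: $h$ must be smaller than the distance from $\text{supp}\,\z_i$ to $\p V_i$, so that the vertical collar over each support stays inside $V_i\cap\Om$ and the fundamental theorem of calculus applies along every segment.
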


We now analyze an specific case where the best value $C_1=1$ is achieved. Namely, the
situation where $\Om$ is either a ball or an annulus. It is already addressed in \cite{Mot}
in the space $W^{1,1}(\Om)$.
A direct proof in the spirit of our general result in Section \ref{s:3} is next included
for completeness.

\begin{lema}
\label{l2} Let $\Om$ be either of the radially symmetric domains $B:=\uno{|x|<b}$ (a ball) or $A:=\uno{a<|x|<b}$
(an annulus). Then, the inequality \lae{i1} adopts the form,
\begin{equation}
\label{2:2}
\int_{\p\Om}|u| \le \int_\Om |Du|+ \frac{Na^{N-1}+Nb^{N-1}}{b^N-a^N}\int_\Om |u|, \qquad \text{for all $u\in BV(\Om)$,}
\end{equation}
where it is set $a=0$ in the case of the ball.
\end{lema}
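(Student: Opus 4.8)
The plan is to realize the optimal constant $C_1=1$ through a single radial vector field $\mathbf V$ enjoying three properties on $\Om$: (a) $\mathbf V\cdot\nu = 1$ on $\p\Om$, where $\nu=-\bn$ is the outward unit normal; (b) $|\mathbf V|\le 1$ throughout $\Om$; and (c) $\dive\mathbf V$ equals the constant $C_2=\dfrac{Na^{N-1}+Nb^{N-1}}{b^N-a^N}$. Once such a field is available, for $u\in W^{1,1}(\Om)$ one has $|u|\in W^{1,1}(\Om)$ with $|\na|u||=|\na u|$ a.e., and the Gauss--Green formula applied to the field $|u|\mathbf V\in W^{1,1}(\Om;\eren)$ gives
\[
\int_{\p\Om}|u| = \int_{\p\Om}|u|\,(\mathbf V\cdot\nu) = \int_\Om |u|\,\dive\mathbf V + \int_\Om \mathbf V\cdot\na|u|.
\]
Property (c) turns the first summand into $C_2\int_\Om|u|$, while (b) bounds the second by $\int_\Om|\mathbf V|\,|\na u|\le\int_\Om|Du|$; this is exactly \lae{2:2} on $W^{1,1}(\Om)$.

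It remains to exhibit $\mathbf V$. Seeking it in the radial form $\mathbf V(x)=f(|x|)\,x/|x|$, one computes $\dive\mathbf V = f'(r)+\frac{N-1}{r}f(r)$ with $r=|x|$, so that requirement (c) is the linear ODE $f'+\frac{N-1}{r}f=C_2$, whose solutions are $f(r)=\frac{C_2}{N}\,r+K\,r^{1-N}$. For the ball ($a=0$) boundedness at the origin forces $K=0$, and the normalization $f(b)=1$ demanded by (a) yields $C_2=N/b$, i.e.\ $\mathbf V=x/b$; here $|\mathbf V|=|x|/b\le 1$ is immediate. For the annulus, (a) reads $f(b)=1$ and $-f(a)=1$, because the outward normal points as $x/b$ on $\{|x|=b\}$ and as $-x/a$ on $\{|x|=a\}$; eliminating $K$ from these two conditions gives $\frac{C_2}{N}(b^N-a^N)=a^{N-1}+b^{N-1}$, which is precisely the stated value of $C_2$, and then $K<0$. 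The verification of (b) is the crux: since $K<0$ one gets $f'(r)=\frac{C_2}{N}+(N-1)|K|\,r^{-N}>0$, so $f$ increases monotonically from $f(a)=-1$ to $f(b)=1$, whence $|f|\le 1$ and therefore $|\mathbf V|=|f(|x|)|\le 1$ on $\overline A$. Note that $\mathbf V$ is smooth up to $\p\Om$ in both cases, the origin being excluded for the annulus, so the Gauss--Green formula used above is legitimate.

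Finally, the passage from $W^{1,1}(\Om)$ to $BV(\Om)$ is the standard strict--approximation argument recalled in Section~\ref{s:21}: given $u\in BV(\Om)$, one picks $u_n\in C^\infty(\Omb)$ with $u_n\to u$ strictly, so that $u_n\to u$ in $L^1(\Om)$, $\int_\Om|Du_n|\to\int_\Om|Du|$, and $u_n\to u$ in $L^1(\p\Om)$ by continuity of the trace in the strict topology. Writing \lae{2:2} for each $u_n$ and letting $n\to\infty$ delivers the inequality for $u$ with the same constants. The only genuinely delicate point in the whole scheme is the simultaneous fulfillment of (a)--(c) by one field, which the constant--divergence ODE resolves automatically; everything else is routine.
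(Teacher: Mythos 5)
Your proof is correct, but it follows a genuinely different route from the paper's. The paper works in polar coordinates and applies the fundamental theorem of calculus along radial segments starting from each boundary sphere, integrates, swaps the order of integration (Fubini), adds the two resulting estimates, and then verifies a pointwise algebraic inequality, namely that the weight $\bigl(a^{N-1}(b^N-t^N)+b^{N-1}(t^N-a^N)\bigr)/(b^N-a^N)$ is dominated by $t^{N-1}$ on $[a,b]$, by an elementary calculus argument on the difference function. You instead construct a radial vector field $\mathbf V=f(|x|)x/|x|$ with constant divergence, unit outward normal component on $\p\Om$, and $\sup|\mathbf V|\le 1$ (the last point settled by the monotonicity of $f$, since $K<0$ forces $f'>0$), and conclude via Gauss--Green applied to $|u|\mathbf V$; your verifications of the ODE, the boundary conditions, the sign of $K$, and the legitimacy of the divergence theorem (including discarding the singular solution $Kr^{1-N}$ in the ball) are all sound, and the final strict-approximation step to pass from $W^{1,1}(\Om)$ to $BV(\Om)$ is the same as the paper's. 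Note that your intermediate quantities really differ from the paper's: your $f(t)t^{N-1}$ changes sign on $(a,b)$, whereas the paper's weight is nonnegative, so the two pointwise inequalities are not reformulations of one another. What your approach buys is conceptual clarity and generality: it isolates the exact structural reason why $C_1=1$ and the optimal $C_2$ are attained simultaneously (existence of a field with $\mathbf V\cdot\nu=1$, $|\mathbf V|\le1$, $\dive\mathbf V\le C_2$), and this criterion makes sense on any Lipschitz domain, connecting naturally with Giusti's remark on bounded mean curvature quoted in Remark \ref{o:5}. What the paper's approach buys is self-containedness (only Fubini and one-variable calculus) and separate estimates \lae{2:3}, \lae{2:4} on each boundary component, which are of some independent interest.
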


\begin{proof}
By the reasons explained in the forthcoming Theorem \ref{t1} it is enough with considering
$u\in C^1(\Omb)$. In addition, the proof in the case of the annulus essentially encompasses
the one of the ball. Thus we only deal with $\Om = \uno{a<|x|<b}$ where $0 < a < b$.

By employing polar coordinates we write:
$$
v(r,y) = u(ry),\qquad r\ge 0\,,\quad |y|=1.
$$
Thus,
$$
-v(a,y) = -v(r,y) + \int_a^r v'_r(t,y)\ dt,
$$
what implies,
$$
|v(a,y)| \le |v(r,y)| + \int_a^r |v'_r(t,y)|\ dt.
$$
Then, we get by integration,
\begin{multline*}
\int_{|y|=1}\int_a^b |v(a,y)| r^{N-1}\, dr\, dS_y \le \int_{|y|=1}\int_a^b |v(r,y)| r^{N-1}\, dr\, dS_y \\
+ \int_{|y|=1}\int_a^b \iz ( \int_a^r |v'_r(t,y)|\ dt \der) r^{N-1}\, dr\, dS_y,
\end{multline*}
$dS_y$ designating the surface area element on the unitary sphere $\uno{|y|=1}$.
By changing the integration order in the last integral we arrive to,
\begin{multline}\label{2:3}
\frac{b^N-a^N}{Na^{N-1}}\int_{|x|=a}|u| \le \int_{\Om}|u|
+ \frac 1N \int_{|y|=1}\int_a^b \iz ( b^N-t^N \der) |v'_r(t,y)|\, dt\, dS_y.
\end{multline}
A symmetric argument starting at,
$$
v(b,y) = v(r,y) + \int_r^b v'_r(t,y)\ dt,
$$
leads to the inequality,
\begin{multline}\label{2:4}
\frac{b^N-a^N}{Nb^{N-1}}\int_{|x|=b}|u| \le \int_{\Om}|u|
+ \frac 1N \int_{|y|=1}\int_a^b \iz ( t^N-a^N \der) |v'_r(t,y)|\, dt\, dS_y.
\end{multline}
By adding the inequalities \lae{2:3} and \lae{2:4} we obtain,
\begin{multline}\label{2:5}
\int_{\p\Om}|u| \le \frac{Na^{N-1}+Nb^{N-1}}{b^N-a^N}\int_\Om |u| \\
+ \int_{|y|=1}\int_a^b \iz (\frac{a^{N-1}(b^N-t^N)+b^{N-1}(t^N-a^N)} {b^N-a^N} \der)|v'_r(t,y)|\, dt\, dS_y.
\end{multline}
It is claimed now that,
\begin{equation}
\label{2:6}
\frac{a^{N-1}(b^N-t^N)+b^{N-1}(t^N-a^N)} {b^N-a^N}\le t^{N-1},
\end{equation}
for all $\rango atb$. Then, estimate \lae{2:2} follows from \lae{2:5}, \lae{2:6} and the fact that
$$
|v'_r(t,y)|= |Du(ry)|\,, \qquad \rango arb\,,\quad |y|=1.
$$
To show the claim define the function $\vf(t)$ as the difference: ``$t^{N-1}$ minus the left hand side of
\lae{2:6}''. Then $\vf$ vanishes at $t=a,b$, has a  unique critical point $a<t_c< b$,
$$
t_c = \frac{N-1}N \frac{b^N-a^N}{b^{N-1}-a^{N-1}},
$$
while $\vf'(a)>0$ and $\vf'(b)<0$. All these facts together prove the claim.
\end{proof}

\begin{obs}
\rm By taking $u=1$ in the trace inequality \lae{i1} it is found that constant $C_2$ must satisfy the lower estimate,
$$
C_2 \ge \dfrac{\mathcal H_{N-1}(\p\Om)|}{|\Om|}.
$$
A nice feature of \lae{2:2} is that the optimum values both for $C_1$ and $C_2$ are simultaneously attained
in radially symmetric domains.
\end{obs}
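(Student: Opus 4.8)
The plan is to reduce to smooth functions and then exploit the radial symmetry of the domain. First I would invoke the approximation argument of the forthcoming Theorem \ref{t1} to restrict attention to $u\in C^1(\Omb)$, since the inequality for a general $u\in BV(\Om)$ then follows by strict-topology density together with continuity of the trace operator. Moreover, since the annulus contains the ball as the degenerate case $a=0$, I would treat only $\Om=\uno{\rango a{|x|}b}$ with $0<a<b$, the ball being recovered by setting $a=0$ at the end.

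Next I would pass to polar coordinates, writing $v(r,y)=u(ry)$ for $r\ge 0$ and $|y|=1$. Along each radial ray the fundamental theorem of calculus gives $|v(a,y)|\le |v(r,y)|+\int_a^r|v_r'(t,y)|\,dt$ and, symmetrically, $|v(b,y)|\le|v(r,y)|+\int_r^b|v_r'(t,y)|\,dt$. I would then multiply each by the weight $r^{N-1}$, integrate over $\rango arb$ and over the unit sphere $\uno{|y|=1}$, and use Fubini to interchange the order of integration in the resulting double integrals. This produces two inequalities bounding the surface integrals over the inner sphere $\uno{|x|=a}$ and the outer sphere $\uno{|x|=b}$ in terms of $\int_\Om|u|$ and weighted radial integrals of $|v_r'|$.

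Adding these two inequalities yields $\int_{\p\Om}|u|$ on the left, the asserted constant $\frac{Na^{N-1}+Nb^{N-1}}{b^N-a^N}$ in front of $\int_\Om|u|$, and a coefficient of the form $\frac{a^{N-1}(b^N-t^N)+b^{N-1}(t^N-a^N)}{b^N-a^N}$ multiplying $|v_r'(t,y)|$. Since $|v_r'(t,y)|=|Du(ty)|$ and $|Du|$ carries exactly the radial weight $t^{N-1}$ in polar form, the gradient term will coincide with $\int_\Om|Du|$ precisely when this coefficient is dominated by $t^{N-1}$ on $[a,b]$.

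The crux of the argument, and the step I expect to be the main obstacle, is therefore the pointwise estimate
$$
\frac{a^{N-1}(b^N-t^N)+b^{N-1}(t^N-a^N)}{b^N-a^N}\le t^{N-1},\qquad \rango atb.
$$
To establish it I would examine $\vf(t):=t^{N-1}-\frac{a^{N-1}(b^N-t^N)+b^{N-1}(t^N-a^N)}{b^N-a^N}$, which satisfies $\vf(a)=\vf(b)=0$. A computation of $\vf'$ shows it has a single zero $t_c\in(a,b)$, together with $\vf'(a)>0$ and $\vf'(b)<0$; hence $\vf$ rises from $0$, attains a positive maximum at $t_c$, and returns to $0$ at $t=b$, so that $\vf\ge 0$ throughout $[a,b]$. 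This unimodality analysis of $\vf$ is the only genuinely nontrivial point. Once it is in hand, the trace inequality \eqref{e:2:2} follows at once from the summed inequalities, and letting $a\to 0$ delivers the case of the ball.
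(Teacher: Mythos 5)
You have proved the wrong statement. What your proposal reconstructs --- polar coordinates, the fundamental theorem of calculus along rays, Fubini, adding the inner- and outer-sphere estimates, and the pointwise bound \eqref{2:6} via the unimodality of $\vf$ --- is precisely the paper's proof of Lemma \ref{l2}, i.e.\ of the inequality \eqref{2:2} itself. But the statement under review is the Remark that \emph{follows} Lemma \ref{l2} and takes \eqref{2:2} as already established. That Remark asserts two things: (i) in any trace inequality of the form \eqref{i1} the constant $C_2$ must satisfy $C_2\ge \mathcal H_{N-1}(\p\Om)/|\Om|$; and (ii) in the ball and the annulus the inequality \eqref{2:2} realizes the optimal values of $C_1$ and $C_2$ \emph{simultaneously}. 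Neither point appears anywhere in your proposal: you never substitute $u=1$ into \eqref{i1}, and you never compare the constant in \eqref{2:2} with the ratio $\mathcal H_{N-1}(\p\Om)/|\Om|$.

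The missing argument is short. For (i), take $u\equiv 1$ in \eqref{i1}: the trace equals $1$ on $\p\Om$ and $Du=0$, so $\mathcal H_{N-1}(\p\Om)\le C_2\,|\Om|$, which is the claimed lower bound. For (ii), compute for $\Om=\uno{a<|x|<b}$ (with $a=0$ for the ball): writing $\om_N$ for the volume of the unit ball of $\eren$, one has $\mathcal H_{N-1}(\p\Om)=N\om_N(a^{N-1}+b^{N-1})$ and $|\Om|=\om_N(b^N-a^N)$, hence
$$
\frac{\mathcal H_{N-1}(\p\Om)}{|\Om|}=\frac{Na^{N-1}+Nb^{N-1}}{b^N-a^N},
$$
which is exactly the constant multiplying $\int_\Om|u|$ in \eqref{2:2}; thus $C_2$ there attains the smallest value permitted by (i), while $C_1=1$ attains the smallest value permitted by Lemma \ref{l1}, so both optima hold at once. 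Your analysis of $\vf$ is correct and coincides with the paper's own argument for Lemma \ref{l2}, but it is an ingredient the Remark presupposes, not the content to be verified.
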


\subsection{A semicontinuity result}\label{s:24}

The following statement corresponds to \cite[Prop. 1.2]{Mod}.

\begin{teorema}\label{t3}
Let $\Om\subset\eren$ be a bounded {\rm smooth}  domain, $F:\p\Om\times \R\to \R$ a Carath\'{e}dory  function
satisfying,
$$
|F(x,u)-F(x,v)|\le |u-v|,
$$
for all $u,v\in \R$ and $\mathcal H_{N-1}$--almost all $x\in \p\Om$. Then
the functional $J$ defined in \lae{i3} is lower semicontinuous in $BV(\Om)$ with respect to the topology of $L^1(\Om)$.
\end{teorema}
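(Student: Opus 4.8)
The plan is to prove the semicontinuity inequality $J(u)\le\liminf_n J(u_n)$ whenever $u_n\to u$ in $L^1(\Om)$, assuming as we may that $\liminf_n J(u_n)<\infty$ and, along a subsequence, that the $\liminf$ is an actual limit. The bulk term $\int_\Om|Du|$ is already $L^1(\Om)$-lower semicontinuous, so the whole difficulty is the boundary term $\int_{\p\Om}F(x,u)$: the trace operator $\gamma$ is not continuous under $L^1(\Om)$ convergence, and neither the naive ``infimal'' nor the ``supremal'' representation of the $1$-Lipschitz function $F(x,\cdot)$ yields a family of semicontinuous functionals in which one can pass to the limit. I would therefore replace $J$ by a family of genuinely lower semicontinuous functionals $J^{\delta_0}$ that lie below $J$ and recover it as $\delta_0\to0$.

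Concretely, I would work in the tubular neighbourhood $\lau$ furnished by Theorem \ref{t00}, using the $C^1$ signed distance $\tilde d$, the projection $\pi$, the inner normal $\bn$, the parallel surfaces $S_\delta=\{\tilde d=\delta\}$, the collars $\Sigma_\delta=\{0<\tilde d<\delta\}$ and the shrunken domains $\Om_\delta=\{\tilde d>\delta\}$. For a.e.\ $\delta\in(0,\delta_0)$ one has $\|Dv\|(\Om)=\|Dv\|(\Om_\delta)+\|Dv\|(\Sigma_\delta)$, and the first key estimate is the trace--relocation bound
\begin{equation*}
\int_{S_\delta}F(\pi(x),v)\,d\mathcal{H}_{N-1}\le \int_{\p\Om}F(x,\gamma v)\,d\mathcal{H}_{N-1}+\bigl(1+\omega(\delta)\bigr)\,\|Dv\|(\Sigma_\delta),
\end{equation*}
with $\omega(\delta)\to0$ as $\delta\to0$. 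This follows by transporting values along the normal segments $t\mapsto y+t\bn(y)$, using that $F(x,\cdot)$ is $1$-Lipschitz, so that $|F(y,v(y+\delta\bn(y)))-F(y,\gamma v(y))|\le|v(y+\delta\bn(y))-\gamma v(y)|$, integrating the normal variation across $\Sigma_\delta$ by slicing, and controlling the surface Jacobians $J_\delta\to1$. The coefficient $1$ in front of $\|Dv\|(\Sigma_\delta)$ is exactly the optimal trace constant $C_1=1$ of Theorem \ref{t1}, and it is precisely what lets the collar variation be absorbed; this is the step that genuinely fails for merely Lipschitz (piecewise $C^1$) domains, cf.\ Example \ref{eje0}.

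Combining the relocation bound with $\|Dv\|(\Om_\delta)=\|Dv\|(\Om)-\|Dv\|(\Sigma_\delta)$ and averaging over $\delta\in(0,\delta_0)$, I would set
\begin{equation*}
J^{\delta_0}(v)=\frac1{\delta_0}\int_0^{\delta_0}\|Dv\|(\Om_\delta)\,d\delta+\frac1{\delta_0}\int_0^{\delta_0}\Bigl(\int_{S_\delta}F(\pi(x),v)\,d\mathcal{H}_{N-1}\Bigr)d\delta,
\end{equation*}
which yields $J^{\delta_0}(v)\le J(v)+\omega(\delta_0)\,\|Dv\|(\Om)$ for every $v$. The averaging is the decisive device: by the coarea formula, and since $|\nabla\tilde d|=1$, the second summand equals exactly $\frac1{\delta_0}\int_{\Sigma_{\delta_0}}F(\pi(x),v(x))\,dx$, an integral over the \emph{fixed} region $\Sigma_{\delta_0}$ which, because $|F(\pi,v)-F(\pi,v')|\le|v-v'|$, is continuous with respect to $L^1$ convergence; the first summand is lower semicontinuous by Fatou applied to the $L^1$-lower semicontinuity of $v\mapsto\|Dv\|(\Om_\delta)$. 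Hence $J^{\delta_0}$ is $L^1(\Om)$-lower semicontinuous.

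The conclusion is then immediate. For each fixed $\delta_0$,
\begin{equation*}
J^{\delta_0}(u)\le\liminf_n J^{\delta_0}(u_n)\le\liminf_n J(u_n)+\omega(\delta_0)\sup_n\|Du_n\|(\Om),
\end{equation*}
the supremum being finite since $J(u_n)$ is bounded and $F$ is Lipschitz. Letting $\delta_0\to0$, and using that $\frac1{\delta_0}\int_0^{\delta_0}\|Du\|(\Om_\delta)\,d\delta\to\|Du\|(\Om)$ and that the inner traces $v(\cdot+\delta\bn)$ converge in $L^1(\p\Om)$ to $\gamma u$ (a standard property of $BV$ traces), so that the averaged boundary term tends to $\int_{\p\Om}F(x,\gamma u)$, gives $J(u)\le\liminf_n J(u_n)$. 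The main obstacle is the trace--relocation estimate with constant exactly $1$: establishing it requires the $C^{1,1}$ geometry of $\lau$ (uniform control of the Jacobians $J_\delta\to1$, $C^1$ regularity of $\tilde d$) together with the optimal trace inequality, and it is here that the smoothness of $\Om$ is indispensable.
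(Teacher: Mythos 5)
First, a point of comparison: the paper does not actually prove Theorem \ref{t3}; it quotes it from \cite[Prop.~1.2]{Mod}, recording only (in the remarks that follow it) that Modica's proof hinges on the trace inequality \lae{i1} with the exact constant $C_1=1$, i.e.\ on Theorem \ref{t1}. Your argument therefore has to stand on its own, and much of it does: the coarea identity turning the averaged boundary terms into the bulk integral $\frac 1{\delta_0}\int_{\Sigma_{\delta_0}}F(\pi(x),v)\,dx$ (which is $\frac 1{\delta_0}$-Lipschitz for the $L^1(\Om)$ norm), and the Fatou argument for the averaged variations, correctly show that your $J^{\delta_0}$ is $L^1$-lower semicontinuous. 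The genuine gap is in the closing step: you need $\sup_n\|Du_n\|(\Om)<\infty$ and assert it follows from the boundedness of $J(u_n)$ ``since $F$ is Lipschitz''. This is false, and not incidentally so: precisely because the Lipschitz constant of $F$ is allowed to equal the optimal trace constant $1$, the boundary term can cancel the total variation almost exactly, so $J$ is not coercive in $\|Du\|$. Take $\Om=\uno{|x|<1}\subset\R^2$, $F(x,s)=-|s|$, and $u_n=n\chi_{\{1-n^{-2}<|x|<1\}}$; then $u_n\to 0$ in $L^1(\Om)$, $\|Du_n\|(\Om)=2\pi n(1-n^{-2})\to\infty$, $\int_{\p\Om}|\gamma u_n|=2\pi n$, hence $J(u_n)=-2\pi/n$ stays bounded. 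For such sequences your error term $\omega(\delta_0)\sup_n\|Du_n\|(\Om)$ is infinite and the scheme does not close. (The same uncontrolled quantity is hidden inside your relocation bound: since $F$ has no sign, replacing $\int_{\p\Om}F(x,\gamma v)\,J_\delta\,d\mathcal H_{N-1}$ by $\int_{\p\Om}F(x,\gamma v)\,d\mathcal H_{N-1}$ costs $\omega(\delta)\int_{\p\Om}|F(x,\gamma v)|$, which is again only controlled through $\|Dv\|(\Om)$.)

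The repair—and this is in substance the proof of \cite{Mod} that the paper points to—is to arrange matters so that no $\delta$-dependent error ever multiplies $\|Du_n\|$. By the $1$-Lipschitz hypothesis, $F(x,\gamma u_n)\ge F(x,\gamma u)-|\gamma u_n-\gamma u|$, hence
$$
J(u_n)\ \ge\ \|Du_n\|(\Om)-\int_{\p\Om}|\gamma(u_n-u)|+\int_{\p\Om}F(x,\gamma u).
$$
Now apply the optimal trace inequality not on $\Om$ but on the collar $\Sigma_\delta=\uno{0<d<\delta}$ (a $C^{1,1}$ open set, since $\tilde d\in C^{1,1}$ with $|\nabla\tilde d|=1$ on the tube of Theorem \ref{t00}; alternatively re-run the normal slicing of the proof of Theorem \ref{t1} inside the collar), applied to $z_n=u_n-u$ and keeping only the component $\p\Om$ of $\p\Sigma_\delta$:
$$
\int_{\p\Om}|\gamma z_n|\ \le\ \|Dz_n\|(\Sigma_\delta)+C_\delta\|z_n\|_{L^1(\Om)}\ \le\ \|Du_n\|(\Sigma_\delta)+\|Du\|(\Sigma_\delta)+C_\delta\|u_n-u\|_{L^1(\Om)}.
$$
Here the coefficient of the variation is exactly $1$—this is where the $C^{1,1}$ smoothness and Theorem \ref{t1} are indispensable—while the $\delta$-blow-up sits only in $C_\delta$, which multiplies a quantity tending to $0$. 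Since $\|Du_n\|(\Om)-\|Du_n\|(\Sigma_\delta)\ge\|Du_n\|(\Om\setminus\overline{\Sigma_\delta})$, lower semicontinuity of the total variation on the open set $\Om\setminus\overline{\Sigma_\delta}$ gives
$$
\liminf_n J(u_n)\ \ge\ \|Du\|(\Om\setminus\overline{\Sigma_\delta})-\|Du\|(\Sigma_\delta)+\int_{\p\Om}F(x,\gamma u),
$$
and letting $\delta\to 0$ (recall $\|Du\|$ is a finite measure, so $\|Du\|(\Sigma_\delta)\to 0$) yields $\liminf_n J(u_n)\ge J(u)$, with no subsequence extraction and no a priori bound on $\|Du_n\|$ needed. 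Note that every error is carried either by the fixed limit $u$ or by $\|u_n-u\|_{L^1}$, never by $\|Du_n\|$; that is exactly the mechanism your averaging construction lacks.
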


\begin{obss}
\rm
\lai 1 An explicit reference on the regularity degree of $\Om$ for Theorem \ref{t3} to be true is not provided in \cite{Mod}.

\lai 2 Proof of Theorem \ref{t3} strongly relies upon the possibility of choosing $C_1=1$ in \lae{i1}. Since we are showing
in next section that this feature is true provided $\Om$ is $C^{1,1}$ one wonders whether the result still remains valid
for less smooth domains. Example \ref{e:mod} in Section \ref{s:4} shows that class  $C^{1,\al}$, with $0 < \al < 1$, does
not suffice.
\end{obss}

\section{The trace inequality}\label{s:3}

We are next stating the main result. Recall that the key point is the possibility of taking the unity
as the constant multiplying the second integral in \lae{e:ppal}.

\begin{teorema}\label{t1}
Let $\Om\subset \eren$ be a bounded $C^{1,1}$  domain.
Then, there exists a constant $C$ only depending
on $\Om$ such that the inequality,
\begin{equation}
\label{e:ppal}
\int_{\p\Om}|u| \le \int_\Om |Du| + C\int_\Om |u|,
\end{equation}
holds for each $u\in BV(\Om)$.
\end{teorema}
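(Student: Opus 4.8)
The plan is to reproduce, in the tubular coordinates of Section \ref{s:22}, the radial computation of Lemma \ref{l2}: I integrate $|u|$ inward along the normal segments to $\p\Om$ and control the resulting gradient weight by the Jacobian of the normal map. First I reduce to $u\in C^1(\Omb)$. Since $C^\infty(\Omb)$ is strictly dense in $BV(\Om)$ and the trace operator is continuous for the strict topology (Section \ref{s:21}), any inequality of the form \lae{e:ppal} valid on $C^1(\Omb)$ passes to the strict limit and extends to all of $BV(\Om)$ with unchanged constants. Fix then $0<\de<r_{\p\Om}$ and recall from Theorem \ref{t00} that
$$
\Phi(y,t)=y+t\,\bn(y),\qquad (y,t)\in\p\Om\times[0,\de],
$$
is a bi-Lipschitz homeomorphism onto the inner tube $\Om_\de=\uno{x\in\Om:\,d(x)<\de}$, with $d(\Phi(y,t))=t$ and $\pi(\Phi(y,t))=y$.

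For $u\in C^1(\Omb)$ and each $y\in\p\Om$ the function $t\mapsto u(\Phi(y,t))$ is $C^1$ on $[0,\de]$, so the fundamental theorem of calculus gives
$$
|u(y)|\le |u(\Phi(y,t))|+\int_0^t|\na u(\Phi(y,s))|\,ds,\qquad 0\le t\le\de.
$$
I then average this against a weight $\ro\ge0$ with $\int_0^\de\ro=1$, and put $R(s)=\int_s^\de\ro(t)\,dt$, so $R(0)=1$ and $R(\de)=0$. Multiplying by $\ro(t)$, integrating in $t$ and exchanging the order of integration in the double integral yields
$$
|u(y)|\le\int_0^\de|u(\Phi(y,t))|\,\ro(t)\,dt+\int_0^\de|\na u(\Phi(y,s))|\,R(s)\,ds.
$$
Integrating over $y\in\p\Om$ and applying the area formula for $\Phi$, whose Jacobian I write $J(y,s)$, turns both terms into integrals over $\Om_\de$, the gradient one being $\int_{\Om_\de}|\na u|\,R(d(x))\,J(\pi(x),d(x))^{-1}\,dx$ and the other $\int_{\Om_\de}|u|\,\ro(d(x))\,J(\pi(x),d(x))^{-1}\,dx$.

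The decisive point is to match $\ro$ to a lower bound for $J$. Since $\partial_t\Phi=\bn(y)$ is a unit vector orthogonal to the tangential differential, $J$ equals the tangential Jacobian, and at the $\mathcal H_{N-1}$-almost every point where the Lipschitz Gauss map $\bn$ is differentiable it factorizes in principal directions as $J(y,s)=\prod_{i=1}^{N-1}(1-s\kappa_i(y))$, with $|\kappa_i(y)|\le\La:=r_{\p\Om}^{-1}$ by the Lipschitz-constant formula for $\bn$ in Section \ref{s:22}. For $s<\de<r_{\p\Om}$ each factor is positive and at least $1-s\La$, whence $J(y,s)\ge(1-s\La)^{N-1}$ a.e. Choosing for instance $R(s)=(1-s/\de)(1-s\La)^{N-1}$ --- decreasing with $R(0)=1$, $R(\de)=0$, so that $\ro=-R'\ge0$ is bounded --- forces $R(s)\le(1-s\La)^{N-1}\le J(y,s)$; hence the gradient weight $R(d(x))\,J(\pi(x),d(x))^{-1}\le1$ and the gradient term is bounded by $\int_{\Om_\de}|\na u|\le\int_\Om|Du|$, with coefficient exactly one. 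Since $\ro$ is bounded and $J\ge(1-\de\La)^{N-1}>0$ on $\Om_\de$, the zeroth-order weight is bounded by a constant $C=C(\de,\La,N)$ depending only on $\Om$, producing $C\int_\Om|u|$. Adding the two estimates gives \lae{e:ppal}.

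The main obstacle is the rigorous justification of the area formula together with the a.e. factorization and lower bound of $J$, since $\p\Om$ is merely $C^{1,1}$ and the principal curvatures $\kappa_i$ exist only $\mathcal H_{N-1}$-almost everywhere. This is where I would lean on the theory of sets of positive reach (\cite{Fe}) and on the $C^1$ regularity of the signed distance $\tilde d$ from Theorem \ref{t00}(iv): the inner parallel sets $\uno{\tilde d=s}$ are $C^1$ hypersurfaces on which $\Phi(\cdot,s)$ and $\pi$ are mutually inverse Lipschitz maps, and Rademacher's theorem applied to $\bn$ supplies the a.e. curvatures together with their bound $|\kappa_i|\le r_{\p\Om}^{-1}$.
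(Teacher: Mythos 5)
Your argument is correct, but it is not the route the paper takes for Theorem \ref{t1}: it is, as you yourself say, a globalization of the paper's proof of Lemma \ref{l2}. The paper's proof localizes: it covers $\p\Om$ by chart-induced tubular patches $V_i$, splits $u=\sum\vf_i u$ with a partition of unity, and applies the fundamental theorem of calculus to the \emph{product} $u_i(s,t)J(s,t)$ (equation \lae{e30}); this product trick makes the Jacobian weight multiplying $|\p_t u_i|$ equal to $J(s,t)$ itself, so it cancels exactly under the Lipschitz change of variables \cite[Th. 3.3.2]{EG}, and no quantitative lower bound on $J$ --- hence no curvature or eigenvalue analysis --- is needed; the price is the extra constants coming from $\sup|\p_tJ|/J$ (the mean curvature) and from the terms $\int_\Om|D\vf_i||u|$ in \lae{e5}. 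You instead work with the single bi-Lipschitz map $\Phi(y,t)=y+t\bn(y)$ from Theorem \ref{t00}, apply the fundamental theorem of calculus to $u\circ\Phi$ alone, and recover the unit coefficient on the gradient term by choosing the averaging weight $R(s)=(1-s/\de)(1-s\La)^{N-1}$ \emph{below} the pointwise Jacobian bound $J(y,s)\ge(1-s\La)^{N-1}$, obtained from Rademacher's theorem and the identity $r_{\p\Om}^{-1}=\sup|\bn(x)-\bn(y)|/|x-y|$ of Section \ref{s:22}; this is precisely the mechanism of inequality \lae{2:6} in the radial case, where the explicit weight is dominated by $t^{N-1}$. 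Your route dispenses with the partition of unity and yields an explicit constant $C=C(N,\de,r_{\p\Om})$, at the cost of heavier geometric input: the area formula on the product $\p\Om\times(0,\de)$ rather than in Euclidean charts, and the a.e. bound $|\kappa_i|\le r_{\p\Om}^{-1}$ on the spectrum of $D\bn$. Both points are sound for a $C^{1,1}$ boundary; in fact the spectral bound does not even require a.e. symmetry of $D\bn(y)$, since every complex eigenvalue $\mu$ of $D\bn(y)$ satisfies $|\mu|\le\|D\bn(y)\|\le r_{\p\Om}^{-1}$, whence $\det(I+sD\bn(y))\ge(1-s\La)^{N-1}>0$ for $s<r_{\p\Om}$. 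A fully detailed write-up should still justify the area formula on $\p\Om\times(0,\de)$ by pulling back to the charts $U_i\times(0,\de)$, which is exactly how the paper organizes the $C^{1,1}$ case, so the two proofs ultimately rest on the same measure-theoretic tools while differing genuinely in how the constant $1$ is extracted.
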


\begin{proof}
Thanks to the version of  Meyer--Serrin theorem extended to $BV(\Om)$ (\cite[Th. 2.2.2]{EG}) an arbitrary function $u\in BV(\Om)$
can be approached in the strict topology of $BV$ (see Section \ref{s:21}) by a sequence of functions $u_n\in BV(\Om)\cap C^\infty(\Om)$.
This means by definition that,
$$
\int_\Om |u-u_n|\to 0\qquad \text{and} \qquad \int_\Om |Du_n|\to \int_\Om |Du|.
$$
The continuity of the trace operator with respect to the strict topology (\cite{AFP}) then also implies that,
$$
\lim \int_{\p\Om}|u-u_n|=0.
$$
On the other hand, since a smooth domain as $\Om$ satisfies either the `segment property' (\cite{Ad}) or alternatively the `graph
property' (\cite{Tar}), then any function $u\in W^{1,1}(\Om)$ can be approached in this space by a sequence of functions in $C^\infty(\Omb)$
(by definition the restrictions to $\Omb$ of functions in $C^\infty(\eren)$). Therefore, to show \lae{e:ppal} it can be assumed
from the start that $u\in C^1(\Omb)$.

We are first assuming that $\Om$ is $C^2$. Accordingly, we select a $C^2$ family
$\uno{(g_i,U_i)}_{i=1,\dots,m}$ of charts for $\p\Om$, exhibiting the properties detailed in Section \ref{s:22}.
These  charts can be furthermore arranged such that,
$$
\bn_{|g_i(U_i)} = \frac{{g_i}_{s_1}\wedge \cdots \wedge {g_i}_{s_{N-1}}}{|{g_i}_{s_1}\wedge \cdots \wedge {g_i}_{s_{N-1}}|},
$$
for all $\rango 1im$. In this expression ${g_i}_{s_k}$ stands for the partial derivative $\p_{s_k} g_i$ while the numerator
consists in the exterior product of all these derivatives.

By replacing $r_{\p\Om}$ in \lae{e40} by a possibly smaller number $\e>0$, mappings,
\begin{equation}
\label{e41}
\map{h_i}{U_i\times (-\e,\e)\subset \R^{N}}\eren{(s,t)}{h_i(s,t)= g_i(s)+t\bn(g(s_i))},
\end{equation}
now define $C^1$ diffeomorphisms onto their images,
$$
V_i =h_i(U_i\times (-\e,\e)).
$$
They satisfy,
$$
\Om\cap V_i = h_i(U_i\times (0,\e)),
$$
while,
$$
\lau_\e = \cup_{i=1}^m V_i\supset \p\Om,
$$
becomes tubular neighborhood of $\p\Om$ of size $\e$.
On the other hand, an open $V_0\subset \overline V_0\subset \Om$ can be found so that
$\uno{V_i}_{\rango 0im}$ constitutes a finite covering of $\Omb$.

To complete this preparatory phase of the proof we introduce a partition of unity
subordinated to the covering $\uno{V_i}_{\rango 0im}$ of $\Omb$. It consists on
a family $\uno{\vf_i}_{\rango 0im}$ of $C^\infty$ functions such that $\rango 0{\vf_i}1$
and every $\vf_i$ is supported in $V_i$. In addition,
$$
\sum_{i=0}^m \vf_i = 1.
$$
Thus, we can express $u\in C^1(\Omb)$ in the form,
$$
u = \sum_{i=0}^m \vf_i u=:\sum_{i=0}^m u_i.
$$
We now show that for every $\rango 1im$ the function $u_i$ satisfies an inequality of
the form,
\begin{equation}
\label{e2}
\int_{\p \Om}|u_i|\le \int_\Om |Du_i|+ C_i\int_\Om |u_i|,
\end{equation}
where the constant $C_i$ depends on the mean curvature of $\p\Om$. To prove \lae{e2}
let us write,
\begin{equation}
\label{e3}
\int_{\p \Om}|u_i|= \int_{\p \Om\cap V_i}|u_i|= \int_{U_i} |u_i(s,t)|J(s,0)\ ds.
\end{equation}
We are going to describe in more detail the coefficient $J(s,0)$ in the surface area element.
Here and by an abuse of notation we have represented,
$$
u_i(s,t) = u_i(h_i(s,t))=u_i(g_i(s)+t\bn(s)),\qquad 0 \le t<\e,
$$
where $\bn(s)$ stands for $\bn(g_i(s))$.
Remark now that,
$$
Dh_i(s,t) = \text{\rm col\,} (g_{s_1}+t\bn_{s_1},\dots,g_{s_{N-1}}+t \bn_{s_{N-1}},\bn),
$$
where subindex $i$ in $g$ is dropped to short, $g_{s_j} = \p_{s_j}g$ and
`{\it col\,}' is employed to describe the subsequent matrix by columns. If it is denoted,
$$
J(s,t)= \det Dh_i(s,t),
$$
then we observe that,
\begin{equation}\label{e33}
J(s,0) = \det \text{\rm col\,} (g_{s_1},\dots,g_{s_{N-1}},\bn) = |{g_i}_{s_1}\wedge \cdots \wedge {g_i}_{s_{N-1}}|.
\end{equation}

Thus $J(s,t)$ can be assumed to be positive for $\e$ small. Moreover, $J(s,0)\ ds$ is just the
expression of the surface area element on $\p\Om\cap V_i$.

On the other hand,
\begin{multline}\label{e30}
-u_i(s,0)J(s,0) = \int_0^{\e}\p_t(u_i(s,t)J(s,t))\ dt \\=
\int_0^{\e}\p_t u_i(s,t) J(s,t )\ dt + \int_0^{\e}u_i(s,t)\p_t J(s,t )\ dt .
\end{multline}
Thus,
\begin{multline}\label{e31}
|u_i(s,0)|J(s,0)\le \int_0^{\e}|\p_t u_i(s,t )| J(s,t )\ dt \\+
\int_0^{\e}|u_i(s,t )| \frac{|\p_t J(s,t )|}{J(s,t )} J(s,t )\ dt .
\end{multline}
By integrating in $U_i$ with respect to $s$ we obtain,
\begin{multline}\label{e32}
\int_{U_i} |u_i(s,0)|J(s,0)\ ds
\le \int_{U_i}\int_0^{\e}|\p_t u_i(s,t )| J(s,t )\ dt \ ds \\ +
\int_{U_i} \int_0^{\e}|u_i(s,t )| \frac{|\p_t J(s,t )|}{J(s,t )}J(s,t )\ dt \ ds.
\end{multline}
Since $J(s,0)\ ds$ defines the surface area element on $\p\Om\cap V_i$ we achieve the
inequality \lae{e2}. To this purpose, the change of variables $x=h_i(x,t)$ must be performed in the volumetric
integrals in the right hand side. Observe also that,
$$
\p_t u_i(s,t) = Du_i(h_i(s,t))\bn(s),
$$
and so $|\p_t u_i(s,t)|\le |Du_i|$. Finally,
$$
C_i = \sup_{U_i\times (0,\e)} \frac{|\p_t J|}{J}.
$$
In this regard it should be noticed that,
\begin{multline*}
\p_t J(s,0) =
\det \text{\rm col\,} (\bn_{s_1},\dots,g_{s_{N-1}},\bn) +\cdots+\det \text{\rm col\,} (g_{s_1},\dots,\bn_{s_{N-1}},\bn)
\\
= \text{\rm trace}\ \text{\rm col\,} (\bn_{s_1},\dots,\bn_{s_{N-1}}) J(s,0) =  \text{\rm trace\,} D\bn(s) J(s,0).
\end{multline*}
Thus,
$$
\frac{\p_t J(s,0)}{J(s,0)} = - \text{\rm trace\,} D\nu(g(s)) = (N-1)H(g(s)),
$$
where $H(x)$ stands for the mean curvature of $\p\Om$ when this surface is oriented with the {\it outer}
unit normal $\nu=-\bn$.
Reader is referred to \cite{Tho} for the geometrical background and to \cite[Sec. 3]{PPS}
for further computation details.
As a conclusion constant $C_i$ in \lae{e2}
is directly related to the mean curvature of $\p\Om$.

Starting at \lae{e2} we now obtain,
\begin{multline}\label{e4}
\int_{\p\Om}|u| = \sum_{i=1}^m \int_{\p\Om}\vf_i |u|= \sum_{i=1}^m \int_{\p\Om}|u_i|\\
\le \int_{\Om}|Du_0|+ \sum_{i=1}^m \int_{\Om}|Du_i| +\max\uno{\max_{\rango 1im}C_i,1}\int_\Om |u|.
\end{multline}
In addition,
\begin{multline}\label{e5}
\sum_{i=0}^m\int_\Om |Du_i|
\le \sum_{i=0}^m\int_\Om \vf_i |Du|+ \sum_{i=0}^m\int_\Om  |D\vf_i||u|\\
\le \int_\Om |Du| + (m+1) \max_{\rango 0im}\|D\vf_i\|_\infty\int_\Om |u|.
\end{multline}
By combining \lae{e4} and \lae{e5} we arrive to the desired inequality \lae{e:ppal}.

As for the case where $\Om$ is $C^{1,1}$, we choose a $C^{1,1}$ atlas
$\uno{(g_i,U_i)}_{\rango 1im}$ for $\p\Om$ with the properties
described in Section \ref{s:22}. Mappings $h_i$ in \lae{e41}, where $0 < \e<r_{\p\Om}$
is regarded as a parameter, define Lipschitz homeomorphisms with Lipschitz inverses
$h_i^{-1}$ onto their images $V_i$. Covering $\uno{V_i}_{\rango 0im}$ and associated
partition of unity $\uno{\vf_i}_{\rango 0im}$ are formed in the same way as before.

The key point to show \lae{e:ppal} consists alike in achieving the inequality \lae{e2}.
To this aim we first notice that by Rademacher's theorem $Dh_i(s,t)$ is defined
for all $|t|<\e$ and almost all $s\in U_i$. Moreover,
$$
J(s,t) = \text{det\,}Dh_i(s,t) = \sum_{k=0}^{N-1}J_k(s)t^k,
$$
hence $J$ is a polynomial in $t$ with coefficients $J_k\in L^\infty(U_i)$.
Since $J_0(s)=J(s,0)$ is given by \lae{e33} then $J(s,t)>0$ for all $|t|<\e$ and almost
all $s\in U_i$ provided $\e>0$ is conveniently reduced. In addition, equality \lae{e30} holds a. e. in
$U_i$. Then we deduce \lae{e31} and thus, after integration in $s$, we arrive to estimate
\lae{e32}.

On the other hand both the identities,
$$
\int_\Om \iz|\frac{\p u_i}{\p \bn}\der|\ dx = \int_{V_i} \iz|\frac{\p u_i}{\p \bn}\der|\ dx =
\int_{U_i}\int_0^{\e}|\p_t u_i(s,t )| J(s,t )\ dt \ ds,
$$
and
$$
\int_\Om |u_i|\frac{|J_t|}J\ dx = \int_{V_i} |u_i|\frac{|J_t|}J\ dx
=
\int_{U_i} \int_0^{\e}|u_i(s,t )|\frac{|\p_t J(s,t )|}{J(s,t )} J(s,t )\ dt \ ds,
$$
are achieved after performing the substitution $x=h_i(s,t)$ and employing the change
of variables formula, in its Lipschitz version \cite[Th. 3.3.2]{EG}.

This concludes the proof in the $C^{1,1}$ case.
\end{proof}

\begin{obs}\label{o:5}
\rm It should be stressed that the possibility of achieving $C_1=1$ in \lae{i1} is
suggested in \cite[Rem. 1.1]{Giu} by the terse quotation: ``{\it \dots if the
mean curvature of $\p\Om$ is bounded above then \lae{i1} holds with $C_1=1$}''.
\end{obs}

We are next exploiting the argument of the previous proof to show Theorem \ref{t2}.

\begin{proof}
[Proof of Theorem \ref{t2}] Unlike the case of Theorem \ref{t1}, the inner unit field $\bn$
is now only continuous. Anyway, a regularization $X\in C^\infty(\eren,\eren)$ of $\bn$ can been obtained
such that,
$$
|X(x)-\bn(x)| \le \eta,\qquad \text{for $x\in \uno{d(x)\le d_0}$},\qquad d_0>0,
$$
where $0 < \eta <1$ is a fixed small number. By denoting $x=\phi(t,y)$ the solution to,
$$
\frac{dx}{dt} = X(x),\qquad x(0)=y,
$$
i. e. $\phi$ is the flow associated to $X$, $t_0>0$ can be further found so that,
$$
|X(\phi(t,y))-\bn(y)| \le \eta,\qquad \text{for all $|t|\le t_0$ and each $y\in \p\Om$}.
$$
As in Theorem \ref{t1} we select a finite atlas $\uno{(g_i,U_i)}_{\rango 1im}$ for
$\p\Om$ and set the mappings $h_i(s,t) = \phi(t,g_i(s))$. They are diffeomorphisms
onto their images, now denoted as $W_i$, when such mappings are defined in
$U_i\times \uno{|t|<\e}$ with $\e>0$ small.

After properly picking an open $W_0\subset\subset \Om$ and considering a partition of unity
$\{\psi_i\}_{{\rango 0im}}$ associated to the covering $\uno{W_i}_{\rango 0im}$ of $\Omb$,
we decompose again,
$$
u = \sum_{i=0}^m \psi_i u = \sum_{i=0}^m u_i.
$$

As an alternative to \lae{e2} we are now obtaining an inequality of the form,
\begin{equation}
\label{e22}
\int_{\p \Om}|u_i|\le \frac{1+2\eta}{1-\eta}\int_\Om |Du_i|+ C_i'\int_\Om |u_i|,
\end{equation}
for all $\rango 1im$.

To begin with, we define,
$$
\tij(s,t) = \text{\rm det}\, \text{\rm col} (D\phi\, g_{s_1},\dots,D\phi \, g_{s_{N-1}},X),
$$
where $D\phi = D_y\phi$ and index $i$ in $g$ has been dropped. By taking into account that,
\begin{multline}\label{e240}
\frac{\tij(s,0)}{J(s,0)} =
\frac{{g_i}_{s_1}\wedge \cdots \wedge {g_i}_{s_{N-1}}}{|{g_i}_{s_1}\wedge \cdots \wedge {g_i}_{s_{N-1}}|}\cdot X(g(s))
= \bn(g(s))\cdot X(g(s))\ge 1-\eta,
\end{multline}
it follows in particular the positiveness of the factor $\tij(s,0)$.

We next mimick the corresponding step in Theorem \ref{t1} to obtain,
\begin{multline}\label{e23}
\int_{U_i} |u_i(s,0)|\tij(s,0)\ ds
\le \int_{U_i}\int_0^{\e_i}|\p_t u_i(s,t)| \tij(s,t)\ dt\ ds \\ +
\int_{U_i} \int_0^{\e_i}|u_i(s,t)|\frac{|\p_t \tij(s,t)|}{\tij(s,t)}\tij(s,t)\ dt\ ds,
\end{multline}
where $u_i(s,t)= u(\phi(t,g(s)))$ and $g=g_i$.
Thus, by employing \lae{e240} we are lead to,
\begin{equation}
\label{e24}
(1-\eta)\int_{\p\Om}|u_i| \le \int_{U_i} |u_i(s,0)|\tij(s,0)\ ds.
\end{equation}

Next, one observes that for every $y\in\p\Om$,
$$
|X(\phi(t,y))|\le |X(\phi(t,y))-X(y)|+|X(y)-\bn(y)|+1\le 2\eta +1.
$$
Hence,
$$
|\p_t u_i(s,t)|\le (1+2\eta)|Du_i|.
$$
Accordingly,
\begin{equation}
\label{e25}
\int_{U_i}\int_0^{\e_i}|\p_t u_i(s,t)| \tij(s,t)\ dt\ ds\le (1+2\eta)\int_\Om |Du_i|.
\end{equation}
By combining \lae{e23}, \lae{e24} and \lae{e25} we arrive to the desired inequality \lae{e22}.
Finally a reasoning as the one in Theorem \ref{t1} leads us to the estimate \lae{i1} with
the constant,
$$
C_1 = \frac{1+2\eta}{1-\eta}.
$$
It is clear that $C_1$ can be taken as close to unity as desired and so we are done.
\end{proof}

The proof of Lemma \ref{l1} is now performed in a $C^1$ domain by forming a smooth
family of functions $u_n$. Indeed, let $\zeta\in C^\infty(\R)$ be so that $0\le \z \le 1$, $\z(0)=1$,
$\z'<0$ for $0< t < 1$ and $\z=0$ in $t\ge 1$. We define,
$$
u_n(x) = \sum_{i=1}^m \psi_i(x)\z \iz ( \frac{t(x)}{\e_n}\der),\qquad \e_n\to 0+,
$$
where $x\to (s(x),t(x))$ is the inverse mapping of $h_i(s,t)$ when regarded in $W_i$ (families
$\uno{W_i}$ and $\uno{\psi_i}$ are the ones introduced in the previous proof of Theorem \ref{t2}).
We notice that $u_n\in C^1(\Omb)$ while,
$$
\int_{\p\Om}|u_n| = |\p\Om|_{N-1}\, \qquad\text{and}\qquad \int_{\Om}|u_n| = o(1),\qquad
\text{as $n\to \infty$.}
$$
On the other hand,
$$
\int_{\Om}|Du_n|\le
\sum_{i=1}^m \int_{\Om}\e_n^{-1}\psi_i \iz|\z'\iz (\frac{t}{\e_n}\der )
\na t \der| + o(1),\qquad
\text{as $n\to \infty$.}
$$
Now observe that,
\begin{multline*}
\int_{\Om}\e_n^{-1}\psi_i \iz|\z'\iz (\frac{t}{\e_n}\der )
\na t \der|
= -\int_0^1 \int_{U_i}\z'(t )\psi_i(s,\e_nt )|\na t(s,\e_nt )|\tij(s,\e_nt )
\\
\to \int_{U_i}\psi_i(s,0)|\na t(s,0)|\tij(s,0),\qquad \text{as $n\to \infty$.}
\end{multline*}
In addition,
$$
\tij(s,0) = \bn(g(s))\cdot X(g(s)) J(s,0),
$$
while a direct computation yields,
$$
\na t(s,0)= \frac 1{\bn(g(s))\cdot X(g(s))} \bn(g(s)).
$$
Therefore,
$$
\varlimsup \int_{\Om}|Du_n|\le \sum_{i=1}^m \int_{U_i}\psi_i(s,0) J(s,0) = |\p\Om|_{N-1}.
$$
Inequality $C_1\ge 1$ is then obtained by setting $u=u_n$ in \lae{i1} and passing to the limit.

\section{Some examples}\label{s:4}

Our first example shows that the conclusion of Theorem \ref{t2} is in general false
if $\Om$ is a piecewise $C^1$ domain (cf. also \cite[p. 12]{AnG}).

\begin{ejem}
\label{eje0} \rm
Let $\Om$ be an open domain such that for a certain small $\eta >0$ its shape near $0$
is described as,
$$
\Om\cap \uno{x_N<\eta} = \left\{x= (x',x_N): L|x'| < x_N<\eta \right\},
$$
where here and in the subsequent examples we are denoting $x'=(x_1,\dots,x_{N-1})$.
Assume also that $\p\Om\cap\uno{x_N>\frac \eta2}$ is a smooth hypersurface so that
$\Om$ is piecewise $C^1$.

We set $F_n = \uno{L|x'|< x_N<L r_n,\, |x'|<r_n}$, $u_n = \chi_{F_n}$ (the characteristic
function of $F_n$), where $L>0$ and are assuming that $r_n\to 0+$. Then,
$$
\int_{\p\Om} |u_n| = \sqrt{1+L^2}\,\, \om_{N-1}r_n^{N-1},
$$
$\om_{N-1}$ being the volumen of $\uno{|x'|<1}$. In addition,
$$
\int_{\Om}|Du_n| = P(E_n,\Om) = \om_{N-1}r_n^{N-1},
$$
where the gradient of $u_n$ has been performed in distributions sense (Section 2.1).
Since,
$$
\int_\Om |u_n| = \frac{\om_{N-1}L}N r_n^N,
$$
then constant $C_1$ in \lae{i1} is constrained by the inequality,
$$
C_1\ge \sqrt{1+L^2}.
$$
Thus $C_1$ is bounded away from unity.
\end{ejem}

Next example shows that an inequality as \lae{e:ppal} fails to be true if the smoothness
of $\p\Om$ is somehow weakened from $C^{1,1}$. In fact, it proves that $C^{1,\al}$ domains,
$0 < \al <1$, are not sufficiently smooth to ensure us the validity of \lae{e:ppal}.

\begin{ejem}\label{eje1}
\rm
Let $\Om\subset \R^{N}$ be an open $C^{1,\al}$ domain with $0\in \p\Om$, satisfying,
$$
\Om\cap \uno{x_N<\eta} = \left\{x= (x',x_N): x_N>\frac{|x'|^{1+\al}}{1+\al}\right\},\qquad 0 <\al <1,
$$
for a certain positive and small enough number $\eta$. Fix
a sequence $r_n\to 0+$ and define for large $n$,
$$
E_n =\uno{\psi(|x'|)< x_N<\psi(r_n)}\subset \Om, \qquad \psi(t)=\frac{|t|^{1+\al}}{1+\al}.
$$
We are showing that the family $u_n = \chi_{E_n}$
cannot satisfy the inequality \lae{e:ppal} when $n\to \infty$. In fact,
\begin{multline*}
\int_{\p\Om}|u_n| = \int_{|x'|< r_n}\sqrt{1+|x'|^{2\al}}\ dx'\\
= (N-1)\om_{N-1}\iz (\frac{r_n^{N-1}}{N-1}+\frac 12\frac{r_n^{N-1+2\al}}{N-1 + 2\al}+ O(r_n^{N-1 +4\al})\der)\\
= \om_{N-1}{r_n^{N-1}}+\frac{(N-1)\om_{N-1}}{2(N-1+2\al)}r_n^{N-1+2\al}+ O(r_n^{N-1+4\al}).
\end{multline*}
 As in Example \ref{eje0},
$$
\int_{\Om}|Du_n| = P(E_n,\Om) = \om_{N-1}r_n^{N-1}.
$$
Thus,
$$
\int_{\p\Om}|u_n| - \int_{\Om}|Du_n| =
\frac{(N-1)\om_{N-1}}{2(N-1 +2\al)}r_n^{N-1+ 2\al}+ O(r_n^{N-1 +4\al}).
$$
In addition,
$$
\int_\Om |u_n| = |E_n| = \frac{\om_{N-1}}{N+\al}r_n^{N+\al}.
$$
Since $0 <\al <1$ exponent $N+\al$ is greater than $N-1+2\al$.
Therefore, inequality \lae{e:ppal} cannot be satisfied when
$r_n\to 0+$.
\end{ejem}

\begin{obs}
\rm Let us check that the normal curvatures of $\p\Om$ blow up
when we approach to $0$ and the boundary is oriented near $0$ with the
inner normal field,
$$
\bla n = \frac1{\sqrt{1+|x'|^{2\al}}}\iz\{\der(-|x'|^{\al-1}x',0)+e_N\}.
$$
In fact, if $v\in \R^{N-1}$ is an unitary
vector, the curve in $\p\Om$ defined as
$$
x = t(v,0) + \frac{|t|^{\al +1}}{\al +1}e_N,\qquad |t|<\e,
$$
passes through $0$ when $t=0$. It is in fact a normal section to $\p\Om$ since each of its
points is contained in the plane $\text{\rm span\,} \uno{(v,0),e_N}$, while
the family of normals $\bla n(x(t))$ also lies in this plane. The expression,
\begin{equation}\label{e4:1}
\kappa = \frac 1{|\dot x|^2}\iz |\ddot x - (\ddot x.\bla t)\bla t\der| = \frac{\al |t|^{\al-1}}{{\{\sqrt{1+t^{2\al}}}\}^3},
\qquad \bla t = \frac{\dot x}{|\dot x|},
\end{equation}
where `dots' mean differentiation with respect to $t$,  provides us the normal curvature of $\p\Om$ at $x(t)$ and the
direction $v$. It can be also checked that $\kappa$ is the minimum of the principal curvatures at this point. Then, it
follows from \lae{e4:1} that all normal curvatures diverge to infinity as $t\to 0+$.
This also occurs  to the mean curvature $H$. Thus, Example \ref{eje1} is coherent with the suggestion in
\cite{Giu} quoted in Remark \ref{o:5}.
\end{obs}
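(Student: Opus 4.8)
The plan is to identify each normal curvature of $\p\Om$ at a point near $0$ with the ordinary curvature of an explicit planar normal section, and then to read the blow-up off that curvature directly. Near the origin $\p\Om$ is the rotationally symmetric graph $x_N=\psi(|x'|)$ with $\psi(t)=|t|^{1+\al}/(1+\al)$. Fixing a unit vector $v\in\R^{N-1}$, I would consider the curve $x(t)=(tv,\psi(|t|))$ for $|t|<\e$; since $|x'|=|t|$ it lies on $\p\Om$ and passes through $0$ at $t=0$. The first task is to confirm that this is a normal section, i.e.\ that the curve and the normal $\bla n(x(t))$ both stay in the fixed plane $P=\mathrm{span}\{(v,0),e_N\}$. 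This follows at once from the stated form of $\bla n$: at $x'=tv$ one has $|x'|^{\al-1}x'=|t|^{\al-1}t\,v$, so $\bla n(x(t))\in P$. Hence the curvature of the space curve $x(t)$ equals, up to sign, the normal curvature of $\p\Om$ in the tangent direction determined by $v$.

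The second step is the (routine) differentiation for $t>0$. From $\dot x=(v,t^\al)$ and $\ddot x=(0,\al t^{\al-1})$ one finds $|\dot x|^2=1+t^{2\al}$, and projecting $\ddot x$ off the tangent line $\bla t=\dot x/|\dot x|$ leaves a normal component of squared length $\al^2 t^{2\al-2}/(1+t^{2\al})$. Substituting into $\kappa=|\dot x|^{-2}\,|\ddot x-(\ddot x\cdot\bla t)\bla t|$ gives exactly $\kappa=\al t^{\al-1}/(1+t^{2\al})^{3/2}$, the value recorded in \lae{e4:1}. Because $0<\al<1$ makes $\al-1<0$, the factor $t^{\al-1}\to+\infty$ while $(1+t^{2\al})^{3/2}\to1$ as $t\to0^+$, so this single normal curvature already diverges.

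To promote this to the full claim that \emph{all} normal curvatures blow up, I would exploit the rotational symmetry of the graph. At a point with $|x'|=\rho>0$ the principal directions are the radial one (tangent to the meridian $x(t)$ above) and the $N-2$ azimuthal ones, with principal curvatures $\kappa_{\mathrm{rad}}=\al\rho^{\al-1}/(1+\rho^{2\al})^{3/2}$ and $\kappa_{\mathrm{az}}=\rho^{\al-1}/(1+\rho^{2\al})^{1/2}$, both positive. Their ratio $\kappa_{\mathrm{rad}}/\kappa_{\mathrm{az}}=\al/(1+\rho^{2\al})<1$ shows that $\kappa_{\mathrm{rad}}$ is the least principal curvature. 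By Euler's formula every normal curvature is a convex combination of the principal curvatures, hence bounded below by $\kappa_{\mathrm{rad}}$; since even this minimum tends to $+\infty$ as $\rho\to0^+$, all normal curvatures, and therefore the mean curvature $H$ (their average), diverge.

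The differentiation in the middle step is entirely mechanical; the two points that require care are the verification that the radial curve is genuinely a normal section, which is what legitimizes reading the hypersurface's normal curvature off a plane-curve computation, and the identification of $\kappa_{\mathrm{rad}}$ as the minimal principal curvature, which is what lets the blow-up of one section propagate to every tangent direction. The cleanest route to the latter is the explicit symmetry computation of $\kappa_{\mathrm{az}}$ given above; diagonalizing the shape operator of the graph would also work but is longer.
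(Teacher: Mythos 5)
Your proof is correct and follows essentially the same route as the paper: verify the radial curve is a normal section via the explicit form of $\bla n$, compute $\kappa=\al t^{\al-1}/(1+t^{2\al})^{3/2}$ from the standard curvature formula, and pass from the divergence of this minimal principal curvature to all normal curvatures and to $H$. Your only addition is to make explicit what the paper leaves as ``it can be also checked'': you compute the azimuthal principal curvature $\kappa_{\mathrm{az}}=\rho^{\al-1}/\sqrt{1+\rho^{2\al}}$ of the rotationally symmetric graph, note $\kappa_{\mathrm{rad}}/\kappa_{\mathrm{az}}=\al/(1+\rho^{2\al})<1$, and invoke Euler's formula --- a correct and clean justification of the minimality claim.
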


\begin{obs}

\label{o:6}
\rm
Example \ref{eje1} also provides us a $C^{1,\al}$ manifold $\p\Om$ with $0 < \al<1$ where $r_{\p\Om}=0$.
In fact we are proving that $r(0)=0$. To this proposal we take $p_\s = (0,\s)\in \R^{N-1}\times \R^+$ and claim
that the sphere $|x-p_\s|=\s$ possesses points outside $\Omb$ for every $\s>0$ small.
This would imply that $\text{dist\,}(p_\s,\p\Om)<\s$ and so such a distance should be
achieved at infinitely many points $y\in\p\Om\menos \uno{0}$
symmetrically distributed around the $x_N$ axis.

To show the claim it is enough with comparing the profiles of the sphere
$x_N = \s-\sqrt{\s^2-|x'|^2}=:\phi(|x'|)$ and the boundary $x_N = \psi(|x'|)$ of $\p\Om$
near the origin. It is clear that $\phi(t)<\psi(t)$ as $t\to 0+$ while $\phi(\s)=\s>\psi(\s)$
when $0<\s <1$. Thus, the checking is complete.

As an extra remark, it follows from \cite[Th. 4.8--3)]{Fe} that the distance function
$d(x)=\text{dist\,}(x,\p\Om)$ can not be differentiable at any of the points $p_\s$.
In other words, $d(x)$ does not define a $C^1$ function under the conditions of the
present example (cf. \cite{Krap}).
This should be compared with the domains $\Om$ of class $C^{2}$ where the signed distance
$\tilde d(x)$ defines a $C^2$ function in a neighborhood of $\p\Om$ (\cite[Chap. 14]{GT}, \cite{Krap}).
\end{obs}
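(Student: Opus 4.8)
The plan is to exploit the rotational symmetry of $\Om$ about the $x_N$-axis together with a sharp profile comparison between $\p\Om$ and a sphere internally tangent to it at the origin. Since $\psi'(0)=0$, the inner unit normal to $\p\Om$ at $0$ is $e_N$, so the points $p_\s=(0,\s)$ with $\s>0$ lie on the inner normal ray and satisfy $|p_\s-0|=\s$; as $\p\Om$ is compact the distance $d(p_\s)$ is attained, and the origin is the obvious candidate minimizer realizing the value $\s$. I will show that for every small $\s>0$ it is \emph{not} the unique nearest point: in fact $d(p_\s)<\s$, and the genuine minimizers are distributed symmetrically off the axis.

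First I would compare the two profiles near the origin. The lower cap of the sphere $|x-p_\s|=\s$ is the graph $x_N=\phi(|x'|)$ with $\phi(t)=\s-\sqrt{\s^2-t^2}$, whence $\phi(t)=\dfrac{t^2}{2\s}+O(t^4)$ as $t\to0^+$, while the boundary is the graph $x_N=\psi(|x'|)$ with $\psi(t)=\dfrac{t^{1+\al}}{1+\al}$. Because $0<\al<1$ forces $1+\al<2$, the boundary rises \emph{faster} than the sphere, so $\phi(t)<\psi(t)$ for all small $t>0$. Equivalently, a point of the sphere with $0<|x'|$ small has $x_N=\phi(|x'|)<\psi(|x'|)$ and thus lies strictly below $\p\Om$, i.e.\ outside $\Omb$. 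Hence $\partial B(p_\s,\s)$ meets the complement of $\Omb$ while its center $p_\s$ lies in $\Om$.

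From this the strict inequality $d(p_\s)<\s$ follows: moving along a radius of $B(p_\s,\s)$ from $p_\s$ toward any sphere point lying outside $\Omb$, one passes from $\Om$ to the exterior and therefore crosses $\p\Om$ at an interior point of the ball, a boundary point at distance strictly less than $\s$ from $p_\s$. Consequently the origin, at distance exactly $\s$, is not a nearest point of $p_\s$. For $\s$ small the minimizers lie within distance $2\s$ of $0$, hence in the region where the graph description is valid; invoking the invariance of $\Om$ under rotations fixing the $x_N$-axis, every rotation of a minimizer is again a minimizer, and since the only point of $\p\Om$ on the axis near $0$ is the origin (which is excluded), the minimizers lie off the axis and form a nontrivial orbit—at least two points, and infinitely many when $N\ge3$. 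Thus each $p_\s$ admits several nearest points on $\p\Om$.

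Finally I would draw both conclusions. Since $p_\s\to0$ as $\s\to0$ and each $p_\s$ lacks a unique foot on $\p\Om$, no ball $B(0,r)$ can consist entirely of points with a single nearest boundary point; by the definition of the reach this gives $r(0)=0$, and hence $r_{\p\Om}=\inf_{y}r(y)=0$. For the differentiability claim I would appeal to \cite[Th. 4.8--3)]{Fe}, by which differentiability of $d$ at a point off $\p\Om$ forces its nearest point to be unique; as $p_\s$ has several nearest points, $d$ cannot be differentiable there. The only delicate step is the strict profile comparison of the second paragraph: it is exactly the inequality $1+\al<2$ that makes $\p\Om$ dip inside every internally tangent sphere at $0$, and this is precisely where the gap between $C^{1,\al}$ with $\al<1$ and the borderline $C^{1,1}$ case—in which $1+\al=2$ and the sign of $\phi-\psi$ is instead governed by bounded curvature—becomes decisive.
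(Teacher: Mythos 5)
Your proof is correct and follows essentially the same route as the paper: the internally tangent sphere $|x-p_\s|=\s$ at the origin, the profile comparison $\phi(t)=t^2/(2\s)+O(t^4)<\psi(t)=t^{1+\al}/(1+\al)$ for small $t$ (exactly exploiting $1+\al<2$), the resulting non-uniqueness of nearest points by rotational symmetry, and the appeal to \cite[Th. 4.8--3)]{Fe} for non-differentiability of $d$ at $p_\s$. You merely make explicit some steps the paper leaves implicit (the radius-crossing argument for $d(p_\s)<\s$, the localization of minimizers within $2\s$ of the origin so the graph description applies, and the correct count of the orbit --- two points when $N=2$, infinitely many when $N\ge 3$), which is a slight sharpening rather than a different approach.
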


Finally, let us consider the functionals of type \lae{i3},
$$
J_\pm(u)=\int_\Om |Du| \pm \int_{\p\Om}|u|,
$$
in the space $BV(\Om)$. It is well--known that $J_+$ is lower semicontinuous
in $BV(\Om)$ with respect to the $L^1$ convergence
if $\Om$ is only a bounded Lipschitz domain (\cite[Rem. 1.3]{Mod}).
In strong contrast, we are next showing that $J_-$ fails to exhibit this behavior even
in a more smooth domain.

\begin{ejem}\label{e:mod}
\rm Let $\Om$ be the  $C^{1,\al}$ domain introduced in Example \ref{eje1}.
Let $u_n= \chi_{E_n}$ also be the sequence of functions considered there.
We now set,
$$
v_n = A_n\chi_{E_n},\qquad A_n = \frac 1{r_n^{N-1+2\al}}.
$$
Then,
\begin{multline*}
J_-(v_n)= A_n J_-(u_n)
= A_n\iz(\int_\Om |Du_n| - \int_{\p\Om}|u_n|\der)
\\
 =-\frac{(N-1)\om_{N-1}}{2(N-1 +2\al)}+ O(r_n^{2\al}) < 0,
\end{multline*}
for large $n$. However,
$$
\|v_n\|_{L^1(\Om)} = A_n |E_n| = \frac{\Om_{N-1}}{N+\al}r_n^{1-\al}\to 0.
$$
Therefore, the conclusion of Theorem \ref{t3} can not be true in this case.
\end{ejem}

\subsection*{Acknowledgements}

This research has been partially supported by
Grant PID2022-136589NB-I00 founded by MCIN/AEI/10.13039/50\-1100011033.

\subsection*{Declarations. Conflict of interest} Author declares that has no conflict of interest.


\begin{thebibliography}{10}

\bibitem{Ad}
R.~A. Adams.
\newblock {\em Sobolev spaces}.
\newblock Pure and Applied Mathematics, Vol. 65. Academic Press [Harcourt Brace
  Jovanovich, Publishers], New York-London, 1975.

\bibitem{AFP}
L.~Ambrosio, N.~Fusco, and D.~Pallara.
\newblock {\em Functions of Bounded Variation and Free Discontinuity Problems}.
\newblock Oxford Mathematical Monographs. The Clarendon Press, Oxford
  University Press, New York, 2000.

\bibitem{AMMo}
F.~Andreu, J.~M. Maz\'{o}n, and J.~S. Moll.
\newblock The total variation flow with nonlinear boundary conditions.
\newblock {\em Asymptot. Anal.}, 43(1-2):9--46, 2005.

\bibitem{AnG}
G.~Anzellotti and M.~Giaquinta.
\newblock B{V} functions and traces.
\newblock {\em Rend. Sem. Mat. Univ. Padova}, 60:1--21 (1979), 1978.

\bibitem{BDP}
R.~Barbato, F.~Della~Pietra, and G.~Piscitelli.
\newblock On the first {R}obin eigenvalue of the {F}insler {$p$}-{L}aplace
  operator as {$p\to 1$}.
\newblock {\em J. Math. Anal. Appl.}, 540(2):Paper No. 128660, 25, 2024.

\bibitem{DNOT}
F.~Della~Pietra, C.~Nitsch, F.~Oliva, and C.~Trombetti.
\newblock On the behavior of the first eigenvalue of the {$p$}-{L}aplacian with
  {R}obin boundary conditions as {$p$} goes to 1.
\newblock {\em Adv. Calc. Var.}, 16(4):1123--1135, 2023.

\bibitem{DOSe}
F.~Della~Pietra, F.~Oliva, and S.~Segura~de Le\'{o}n.
\newblock Behaviour of solutions to {$p$}-{L}aplacian with {R}obin boundary
  conditions as {$p$} goes to 1.
\newblock {\em Proc. Roy. Soc. Edinburgh Sect. A}, 154(1):105--130, 2024.

\bibitem{EG}
L.~C. Evans and R.~F. Gariepy.
\newblock {\em Measure theory and fine properties of functions}.
\newblock Textbooks in Mathematics. CRC Press, Boca Raton, FL, revised edition,
  2015.

\bibitem{Fe}
H.~Federer.
\newblock Curvature measures.
\newblock {\em Trans. Amer. Math. Soc.}, 93:418--491, 1959.

\bibitem{Gag}
E.~Gagliardo.
\newblock Caratterizzazioni delle tracce sulla frontiera relative ad alcune
  classi di funzioni in {$n$} variabili.
\newblock {\em Rend. Sem. Mat. Univ. Padova}, 27:284--305, 1957.

\bibitem{GT}
D.~Gilbarg and N.~S. Trudinger.
\newblock {\em Elliptic partial differential equations of second order}, volume
  224 of {\em Grundlehren der mathematischen Wissenschaften [Fundamental
  Principles of Mathematical Sciences]}.
\newblock Springer-Verlag, Berlin, second edition, 1983.

\bibitem{Giu}
E.~Giusti.
\newblock The equilibrium configuration of liquid drops.
\newblock {\em J. Reine Angew. Math.}, 321:53--63, 1981.

\bibitem{GuiP}
V.~Guillemin and A.~Pollack.
\newblock {\em Differential topology}.
\newblock Prentice-Hall, Inc., Englewood Cliffs, NJ, 1974.

\bibitem{HauM}
D.~Hauer and J.~M. Maz\'{o}n.
\newblock The {D}irichlet-to-{N}eumann operator associated with the
  1-{L}aplacian and evolution problems.
\newblock {\em Calc. Var. Partial Differential Equations}, 61(1):Paper No. 37,
  50, 2022.

\bibitem{Krap}
S.~G. Krantz and H.~R. Parks.
\newblock Distance to {$C^{k}$} hypersurfaces.
\newblock {\em J. Differential Equations}, 40(1):116--120, 1981.

\bibitem{LaSe}
M.~{Latorre} and S.~{Segura de Le\'on}.
\newblock {Existence and comparison results for an elliptic equation involving
  the 1-Laplacian and \(L^1\)-data.}
\newblock {\em {J. Evol. Equ.}}, 18(1):1--28, 2018.

\bibitem{LeP}
M.~Lewicka and Y.~Peres.
\newblock Which domains have two-sided supporting unit spheres at every
  boundary point?
\newblock {\em Expo. Math.}, 38(4):548--558, 2020.

\bibitem{Luc}
K.~R. Lucas.
\newblock {\em Submanifolds of dimension {$(n-1)$} in {${\mathcal E}^n$} with
  normals satisfying a {L}ipschitz condition}.
\newblock ProQuest LLC, Ann Arbor, MI, 1957.
\newblock Thesis (Ph.D.)--University of Kansas.

\bibitem{MPe}
U.~Massari and L.~Pepe.
\newblock Sull'approssimazione degli aperti lipschitziani di {$R^{n}$} con
  variet\`a differenziabili.
\newblock {\em Boll. Un. Mat. Ital. (4)}, 10:532--544, 1974.

\bibitem{MRS}
J.~M. Maz\'{o}n, J.~D. Rossi, and S.~Segura~de Le\'{o}n.
\newblock The 1-{L}aplacian elliptic equation with inhomogeneous {R}obin
  boundary conditions.
\newblock {\em Differential Integral Equations}, 28(5-6):409--430, 2015.

\bibitem{Mod}
L.~Modica.
\newblock Gradient theory of phase transitions with boundary contact energy.
\newblock {\em Ann. Inst. H. Poincar\'{e} Anal. Non Lin\'{e}aire},
  4(5):487--512, 1987.

\bibitem{Mot}
M.~Motron.
\newblock Around the best constants for the {S}obolev trace map from
  {$W^{1,1}(\Omega)$} into {$L^1(\partial\Omega)$}.
\newblock {\em Asymptot. Anal.}, 29(1):69--90, 2002.

\bibitem{Ne}
J.~Ne\v{c}as.
\newblock {\em Direct methods in the theory of elliptic equations}.
\newblock Springer Monographs in Mathematics. Springer, Heidelberg, 2012.
\newblock Translated from the 1967 French original by Gerard Tronel and Alois
  Kufner, Editorial coordination and preface by \v{S}\'{a}rka Ne\v{c}asov\'{a}
  and a contribution by Christian G. Simader.

\bibitem{PPS}
R.~Pardo, A.~L. Pereira, and J.~C. Sabina~de Lis.
\newblock The tangential variation of a localized flux-type eigenvalue problem.
\newblock {\em J. Differential Equations}, 252(3):2104--2130, 2012.

\bibitem{SS_24}
J.~C. Sabina~de Lis and S.~Segura~de Le\'{o}n.
\newblock Higher {R}obin eigenvalues for the {$p$}-{L}aplacian operator as
  {$p$} approaches 1.
\newblock {\em Calc. Var. Partial Differential Equations}, 63(7):Paper No. 169,
  2024.

\bibitem{Tar}
L.~Tartar.
\newblock {\em An introduction to {S}obolev spaces and interpolation spaces},
  volume~3 of {\em Lecture Notes of the Unione Matematica Italiana}.
\newblock Springer, Berlin; UMI, Bologna, 2007.

\bibitem{Tho}
J.~A. Thorpe.
\newblock {\em Elementary topics in differential geometry}.
\newblock Undergraduate Texts in Mathematics. Springer-Verlag, New York, 1994.
\newblock Corrected reprint of the 1979 original.

\end{thebibliography}
\end{document}